\documentclass[12pt]{article}
\pdfoutput=1
\usepackage{e-jc}

\makeatletter
\renewcommand{\ps@plain}{%
\renewcommand{\@oddfoot}{\hfil\footrm\thepage}}
\def\dateline#1#2{\mbox{}}
\pagestyle{plain}
\makeatother

\usepackage[euler-digits]{eulervm}
\usepackage{graphicx}
\usepackage{amssymb}
\usepackage{amsmath}
\usepackage[colorlinks=true]{hyperref}

\makeatletter
\DeclareRobustCommand{\textsquare}{%
  \begingroup \usefont{U}{msa}{m}{n}\thr@@\endgroup
}
\providecommand{\qedsymbol}{\textsquare}
\DeclareRobustCommand{\qed}{%
  \ifmmode \mathqed
  \else
    \leavevmode\unskip\penalty9999 \hbox{}\nobreak\hfill
    \quad\hbox{\qedsymbol}%
  \fi
}
\let\QED@stack\@empty
\let\qed@elt\relax
\newcommand{\pushQED}[1]{%
  \toks@{\qed@elt{#1}}\@temptokena\expandafter{\QED@stack}%
  \xdef\QED@stack{\the\toks@\the\@temptokena}%
}
\newcommand{\popQED}{%
  \begingroup\let\qed@elt\popQED@elt \QED@stack\relax\relax\endgroup
}
\def\popQED@elt#1#2\relax{#1\gdef\QED@stack{#2}}
\newcommand{\qedhere}{%
  \begingroup \let\mathqed\math@qedhere
    \let\qed@elt\setQED@elt \QED@stack\relax\relax \endgroup
}

\newenvironment{proof}[1][\proofname]{\par
  \pushQED{\qed}%
  \normalfont \topsep6\p@\@plus6\p@\relax
  \trivlist
  \item[\hskip\labelsep
        \itshape
    #1\@addpunct{.}]\ignorespaces
}{%
  \popQED\endtrivlist\@endpefalse
}
\providecommand{\proofname}{Proof}
\makeatother

\author{G\"otz Pfeiffer\\
Department of Mathematics\\
National University of Ireland, Galway\\
Ireland\\
\texttt{goetz.pfeiffer@nuigalway.ie}}
\title{Quiver Presentations for Descent Algebras of Exceptional Type}

\date{\dateline{Oct 15, 2008}{Feb ??, 200?}\\
\small Mathematics Subject Classification: 20F55; 20F05; 20C40}

\let\emptyset\varnothing

\newcommand{\Q}{\mathbb{Q}}

\renewcommand{\AA}{\mathcal{A}}
\newcommand{\DD}{\mathcal{D}}
\newcommand{\PP}{\mathcal{P}}

\newcommand{\eb}{\mathbf{e}}
\newcommand{\vb}{\mathbf{v}}

\newcommand{\Size}[1]{\left|#1\right|}
\newcommand{\Norm}[1]{\left\|#1\right\|}

\newcommand{\id}{\mathsf{id}}

\newcommand{\tA}{\stackrel{.}{\to}}
\newcommand{\tB}{\stackrel{..}{\to}}
\newcommand{\tC}{\stackrel{...}{\to}}
\newcommand{\tD}{\stackrel{....}{\to}}

\newtheorem{Theorem}{Theorem}

\newtheorem{Lemma}{Lemma}

\begin{document}
\thispagestyle{empty}

\maketitle

\begin{abstract}
  The descent algebra of a finite Coxeter group $W$ is a basic
  algebra, and as such it has a presentation as quiver with relations.
  In recent work, we have developed a combinatorial framework which
  allows us to systematically compute such a quiver presentation for a
  Coxeter group of a given type.  In this article, we use that
  framework to determine quiver presentations for the descent algebras
  of the Coxeter groups of exceptional or non-crystallographic type,
  i.e., of type $E_6$, $E_7$, $E_8$, $F_4$, $H_3$, $H_4$ or $I_2(m)$.
\end{abstract}

\section{Introduction.}

Let $W$ be a finite Coxeter group.  The descent algebra  $\Sigma(W)$ is a
subalgebra of the group algebra of $W$, arising from the partition of
$W$ into descent classes \cite{Solomon1976}.
It supports an algebra homomorphism with nilpotent kernel into the (commutative)
character ring of $W$ and therefore is a basic algebra.  Each basic algebra
has a presentation as quiver with relations~\cite[Section II.3]{AssemEtAl2006}.
In recent work~\cite{pfeiffer-quiver}
we have presented an algorithm for the construction
of such a quiver presentation for any finite
Coxeter group.  
The algorithm has been implemented
in the \textsf{GAP} \cite{GAP} package \textsf{ZigZag} \cite{zigzag},
which is based on the \textsf{CHEVIE} \cite{chevie} package for
finite Coxeter groups and Iwahori--Hecke algebras.

In this article, the algorithm is applied to the Coxeter groups of
exceptional or non-crystallographic type.  The results for dihedral
groups, i.e., Coxeter groups of type $I_2(m)$, are shown in
Section~\ref{sec:i2}, the results for type $H_3$ in
Section~\ref{sec:h3}, for type $H_4$ in Section~\ref{sec:h4}, for type
$F_4$ in Section~\ref{sec:f4}, for type $E_6$ in Section~\ref{sec:e6},
for type $E_7$ in Section~\ref{sec:e7}, and for type $E_8$ in
Section~\ref{sec:e6}.  Each such section contains a labelling of the
generators of $W$ in the form of a Coxeter diagram, tabular listings
of the vertices and edges of the quiver, a picture of the
corresponding graph, a list of relations (if any), a listing of the
projective indecomposable modules and their Loewy series (verifying
results on the Loewy length by Bonnaf\'e and the
author~\cite{BonnafePfeiffer2008}), as well as the Cartan matrix.

In this section we briefly recall the setup~\cite{pfeiffer-quiver} and
provide some important general results.  For a general introduction to
Coxeter groups and related topics we refer to the
books~\cite{Bourbaki1968} and~\cite{GePf2000}.

\subsection{The descent algebra and some of its bases.}
Let $W$ be a finite Coxeter group, generated by a set $S$ of simple
reflections with corresponding length function $\ell$.  The descent
algebra $\Sigma(W)$ is defined as a subspace of the group algebra
$\Q W$ as follows.

For a subset $J \subseteq S$ we denote by $W_J$ the (parabolic)
subgroup of $W$ generated by $J$ and we define
\begin{align*}
X_J := \{x \in W : \ell(sx) > \ell(x) \text{ for all } s \in J\}
\end{align*}
as the set of minimal length right
coset representatives of $W_J$ in $W$.  Then 
\begin{align*}
  X_J^{-1} := \{x^{-1} : x \in X_J\} 
\end{align*}
is a set of left coset representatives of $W_J$ in $W$ and
it is well-known that for $J, K \subseteq S$ the intersection 
\begin{align*}
  X_{JK} := X_J \cap X_K^{-1}
\end{align*}
is a set of representatives of the $W_J$,
$W_K$-double cosets in $W$.  For $J \subseteq S$, we form the sum
\begin{align*}
  x_J := \sum X_J^{-1} \in \Q W
\end{align*}
and define $\Sigma(W)$ to be the subspace of $\Q W$ spanned by 
the elements $x_J$, $J \subseteq S$.
By Solomon's Theorem~\cite{Solomon1976}, 
$\Sigma(W)$ is in fact a subalgebra of $\Q W$, since, for $J, K \subseteq S$,
\begin{align*}
  x_J x_K = \sum_{L \subseteq S} a_{JKL} x_L
\end{align*}
where $a_{JKL} = \Size{X_{JKL}}$ and $X_{JKL} = \{d \in X_{JK} : J^d
\cap K = L\}$.

The linear independence of the $x_J$ is best seen with a different
basis.  For this, let 
\begin{align*}
  \DD(w) := \{s \in S : \ell(sw) < \ell(w)\}
\end{align*}
be the (left) descent set of $w \in W$.  Then
$X_J = \{w \in W : \DD(w) \cap J = \emptyset\}$.  We further define, for $K \subseteq S$, the descent class
\begin{align*}
  Y_K := \{w \in W : \DD(w) = S \setminus K\} = \DD^{-1}( S \setminus K),
\end{align*}
and we form the sum
\begin{align*}
  y_K := \sum Y_K^{-1} \in \Q W.
\end{align*}
Then $X_J = \bigsqcup_{K \supseteq J} Y_K$ and thus
$x_J = \sum_{K \supseteq J} y_K$.
Hence, by M\"obius inversion,
\begin{align*}
  y_K = \sum_{J \supseteq K} (-1)^{\Size{J - K}} x_J
\end{align*}
and so the $y_K$ span all of $\Sigma(W)$.
Clearly the $y_K$ are linearly independent and thus 
$\Sigma(W)$ has dimension  $2^{\Size{S}}$, the number of subsets of $S$.

The group $W$ acts on itself and on its subsets by conjugation and
thus partitions the power set $\PP(S)$ into classes of conjugate
subsets of $S$.  In this context, a third basis $\{e_L: L \subseteq S\}$ of $\Sigma(W)$ has been introduced by Bergeron,
Bergeron, Howlett and Taylor~\cite{BeBeHoTa92}, as follows.
For $J \subseteq S$, we define
\begin{align*}
  X_J^{\sharp} := \{x \in X_J : J^x \subseteq S\} 
\end{align*}
and we denote by 
\begin{align*}
  [J]:= \{J^x : x \in X_J^{\smash{\sharp}}\} \subseteq \PP(S)
\end{align*}
the class of $J$.  Moreover, we denote by 
\begin{align*}
  \Lambda := \{[J] : J \subseteq S\}
\end{align*}
the set of all classes, and for $L \in \lambda \in
\Lambda$ we set $\Norm{\lambda}:= \Size{L}$.

For $J, K \subseteq S$, we set
\begin{align*} \tag{$*$} \label{eq:star}
  m_{KL} := \sum_{J\in [K]} a_{JKL} =
  \begin{cases}
    \Size{X_K \cap X_L^{\smash{\sharp}}}, & \text{if $K \supseteq L$}, \\
    0, & \text{otherwise.}
  \end{cases}
\end{align*}
Then, for a suitable ordering of the subsets of $S$, the
matrix $(m_{KL})_{K, L \subseteq S}$ is a triangular matrix
with nonzero diagonal entries $\Size{X_{\smash{J}}^{\smash{\sharp}}} > 0$
and hence invertible.  Therefore,  the conditions
\begin{align*}
  x_K = \sum_L m_{KL} e_L,
\end{align*}
uniquely define a basis $\{e_L : L \subseteq S\}$ of $\Sigma(W)$.

Among many other properties, Bergeron, Bergeron, Howlett and Taylor
\cite{BeBeHoTa92}
show that the elements
\begin{align*}
  e_{\lambda} = \sum_{L \in \lambda} e_L, \quad \lambda \in \Lambda,
\end{align*}
form a complete set of primitive, pairwise orthogonal idempotents of
$\Sigma(W)$.

\subsection{The central case.}
The finite group $W$ has a unique element $w_0$ of maximal length
which is characterized by the property that $\DD(w_0) = S$.  We now
show how the structure of $\Sigma(W)$ is constrained if $w_0$ is
central in $W$.  With the exception of $E_6$ and $I_2(m)$, $m$ odd,
all examples of Coxeter groups considered here have a central longest
element $w_0$.  The results on the Loewy length of
$\Sigma(W)$~\cite{BonnafePfeiffer2008} have shown already that it
makes a big difference, whether $w_0$ is central or not. Here we show
that in these cases the descent algebra $\Sigma(W)$ is a direct sum of
subalgebras $\Sigma(W)^+$ and $\Sigma(W)^-$, spanned by the
idempotents $e_{\lambda}$ with $\Norm{\lambda}$ even, and the
$e_{\lambda}$ with $\Norm{\lambda}$ odd, respectively.  For $W$ of
type $B_n$, this decomposition of $\Sigma(W)$ has been observed by
Bergeron~\cite{Bergeron1992}.

For later use, we list here some properties of the 
intersection of $X_L^{\smash{\sharp}}$ and $Y_K$.
For $J \subseteq S$, we denote by $w_J$ the longest
element of the parabolic subgroup $W_J$ of $W$.
We call $u \in W$ a prefix of $w \in W$,
and write $u \leq w$, if
$\ell(u^{-1}w) = \ell(w) - \ell(u)$.
For $J \subseteq S$, the set $X_J$ can then be
described as the set of all prefixes of the longest 
coset representative $w_J  w_0$,
and $Y_J$ is the set of all $x \in X_J$ which
have $w_{S \setminus J}$ as prefix.
In other words, $Y_J$ is the
interval from $w_{S \setminus J}$ to $w_J  w_0$
in the weak Bruhat order.
\begin{Lemma}  \label{la:y-sharp}
 \renewcommand{\labelenumi}{\textup{(\roman{enumi})}}
  Let $L \subseteq K \subseteq S$.  
  \begin{enumerate}
  \item 
 $(-1)^{\Size{K}}\Size{Y_K \cap X_L^{\smash{\sharp}}} = 
\sum_{J \supseteq K} (-1)^{\Size{J}} \Size{X_J \cap X_L^{\smash{\sharp}}}$.
\item If $x \in Y_K \cap X_L^{\sharp}$ then $w_L w_{L \cup (S
    \setminus K)}$ is a prefix of $x$.
\item   In particular, $Y_L \cap X_L^{\sharp} = \{ w_L w_0 \}$.
  \end{enumerate}
\end{Lemma}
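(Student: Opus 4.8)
The plan is to handle the three parts in the order (i), (ii), (iii), since (iii) will fall out of (ii) as the special case $K = L$, while (i) is an independent M\"obius-inversion argument. For (i), I would intersect the disjoint decomposition $X_J = \bigsqcup_{K \supseteq J} Y_K$ with the fixed set $X_L^{\sharp}$ and count, obtaining $\Size{X_J \cap X_L^{\sharp}} = \sum_{K \supseteq J}\Size{Y_K \cap X_L^{\sharp}}$. M\"obius inversion over the Boolean lattice $\PP(S)$ --- exactly the inversion already used above to pass between the $x_J$ and the $y_K$ --- then yields $\Size{Y_K \cap X_L^{\sharp}} = \sum_{J \supseteq K}(-1)^{\Size{J}-\Size{K}}\Size{X_J \cap X_L^{\sharp}}$, and multiplying through by $(-1)^{\Size{K}}$ gives the claimed identity. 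This part is routine.

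The heart of the lemma is (ii). I would first record the standard fact that, because $x \in X_L$, left multiplication by the longest element $w_L$ of $W_L$ is length-additive, $\ell(w_L x) = \ell(w_L) + \ell(x)$, so $w_L \leq w_L x$. Writing $M = L \cup (S \setminus K)$ and noting $w_L \leq w_M$ (as $L \subseteq M$), I would reduce the target $w_L w_M \leq x$ to the cleaner statement $w_M \leq w_L x$: cancelling the common length-additive prefix $w_L$ converts $w_M \leq w_L x$ into $w_L w_M \leq x$. To prove $w_M \leq w_L x$ I would use the descent characterization of parabolic prefixes, $w_M \leq w \iff M \subseteq \DD(w)$ (visible from the factorisation $w = u\,d$ with $u \in W_M$, $d \in X_M$, since $\DD(w)\cap M = \DD(u)$ and $M \subseteq \DD(w)$ forces $u = w_M$). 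It therefore suffices to show $L \cup (S\setminus K) \subseteq \DD(w_L x)$.

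The inclusion $L \subseteq \DD(w_L x)$ is immediate from the length-additive product $w_L \cdot x$. The inclusion $S \setminus K \subseteq \DD(w_L x)$ is where the hypothesis $x \in X_L^{\sharp}$ enters, and I expect this to be the main obstacle. Here I would exploit $L^x \subseteq S$ to pass $w_L$ through $x$: since $x^{-1} W_L x = W_{L^x}$ is a \emph{standard} parabolic, $x^{-1} w_L x = w_{L^x}$, whence $w_L x = x\,w_{L^x}$; as $\ell(w_{L^x}) = \ell(w_L)$ for the conjugate parabolic, this product is length-additive, $\ell(x\,w_{L^x}) = \ell(x) + \ell(w_{L^x})$. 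Now for any $s \in S \setminus K = \DD(x)$ (using $x \in Y_K$) we have $\ell(sx) = \ell(x)-1$, so $\ell(s\,x\,w_{L^x}) \leq \ell(sx) + \ell(w_{L^x}) = \ell(x\,w_{L^x}) - 1$; since left multiplication by a simple reflection changes length by exactly one, this forces $s \in \DD(x\,w_{L^x}) = \DD(w_L x)$. Thus $S \setminus K \subseteq \DD(w_L x)$, completing (ii).

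Finally, for (iii) I would specialise (ii) to $K = L$, so that $S \setminus K = S \setminus L$, $M = L \cup (S\setminus L) = S$, and $w_L w_M = w_L w_0$; part (ii) then gives $w_L w_0 \leq x$ for every $x \in Y_L \cap X_L^{\sharp}$. Since $Y_L$ is the weak-order interval from $w_{S\setminus L}$ to $w_L w_0$, we also have $x \leq w_L w_0$, forcing $x = w_L w_0$. It remains to check $w_L w_0 \in Y_L \cap X_L^{\sharp}$, which holds because $w_L w_0$ is the top of the interval $Y_L$, lies in $X_L$, and satisfies $L^{w_L w_0} = L^{w_0} \subseteq S$ (as $w_L$ permutes $L$ and $w_0$ permutes $S$). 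The only genuine difficulty in the whole lemma is the passage $\DD(x) \subseteq \DD(w_L x)$ in (ii), for which the conjugation trick turning the left factor $w_L$ into the right factor $w_{L^x}$ --- and hence the $\sharp$-hypothesis --- appears essential.
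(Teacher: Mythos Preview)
Your argument is correct in all three parts, and your overall architecture for (ii) --- reduce to $M \subseteq \DD(w_L x)$, invoke the equivalence $w_M \leq w \iff M \subseteq \DD(w)$, then cancel the length-additive prefix $w_L$ --- matches the paper exactly. Parts (i) and (iii) are likewise essentially identical to the paper's proof.

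The one genuine difference is how you establish $\DD(x) \subseteq \DD(w_L x)$. The paper proves the stronger statement $\DD(x) \subseteq \DD(ux)$ for \emph{every} $u \in W_L$, and does so by invoking an external result (\cite[Theorem~2.3.3]{GePf2000}) that for $x \in X_L^{\sharp}$ and $s \in \DD(x)$ the element $d = w_L w_{L\cup\{s\}}$ is a prefix of $x$; a length computation then shows $d$ is also a prefix of $ux$, whence $s \in \DD(d) \subseteq \DD(ux)$. Your route is more direct and self-contained: the $\sharp$-hypothesis lets you commute $w_L$ past $x$ as $w_L x = x\,w_{L^x}$ with $\ell(w_{L^x}) = \ell(w_L)$, and then the subadditivity of length immediately gives $s \in \DD(x) \Rightarrow s \in \DD(x\,w_{L^x})$. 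Your argument avoids the external citation and is shorter; the paper's version, on the other hand, yields the slightly more general inclusion for arbitrary $u \in W_L$ (though only the case $u = w_L$ is used). Note that your conjugation trick in fact extends verbatim to all $u \in W_L$, since $ux = x\,u^x$ with $\ell(u^x) = \ell(u)$ for the same reason, so the two arguments have the same reach.
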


\begin{proof}
(i)  From $X_J = \bigsqcup_{K \supseteq J} Y_K$, it
follows that
\begin{align*}
  X_J \cap X_L^{\sharp} = \bigsqcup_{K \supseteq J} Y_K \cap X_L^{\sharp},
\end{align*}
and hence by M\"obius inversion that
$\Size{Y_K \cap X_L^{\smash{\sharp}}} = 
\sum_{J \supseteq K} (-1)^{\Size{J} - \Size{K}} \Size{X_J \cap X_L^{\smash{\sharp}}}$.

(ii)  Let $x \in Y_K \cap X_L^{\sharp}$.   Then $\DD(x) = S \setminus K$.
We show that if $x \in X_L^{\sharp}$ then $\DD(x)
\subseteq \DD(ux)$ for all $u \in W_L$.   It then follows, for $u = w_L$,
that $L \cup \DD(x) \subseteq \DD(w_L x)$.
For any $w \in W$, it is known~\cite[Lemma~1.5.2]{GePf2000} that 
if $J
\subseteq \DD(w)$ then
$w_J \leq w$.
Hence $w_{L \cup (S \setminus K)} \leq w_L x$
and, since $w_L$ is a common prefix of both sides,
$w_L w_{L \cup (S \setminus K)} \leq x$.

In order to show that $\DD(x) \subseteq \DD(ux)$, we argue as follows.
For $x \in X_L^{\sharp}$, it is known~\cite[Theorem 2.3.3]{GePf2000}
that if $s \in \DD(x)$ then $d:= w_L w_{L \cup \{s\}}$ is a prefix of
$x$.  Now $d \in X_L^{\sharp}$ and thus $\ell(u^d) = \ell(u)$.  It
follows that $\ell(d^{-1} u x) = \ell(u^d d^{-1} x) \leq \ell(u) +
\ell(x) - \ell(d) = \ell(ux) - \ell(d)$.  On the other hand, $\ell(ux)
- \ell(d) \leq \ell(d^{-1} ux)$.  Hence $d$ is a prefix of $ux$.
By construction, $s \in \DD(d)$. 
Thus $s \in \DD(ux)$.

(iii) If $L = K$ then $w_{L \cup (S \setminus K)} = w_0$.  And
$x \in X_L$ implies $x \leq w_L w_0$, whereas from~(ii) it follows that $w_L w_0 \leq x$ for all $x \in
Y_L \cap X_L^{\sharp}$.
\end{proof}

We can now describe the structure of the descent algebra $\Sigma(W)$
for a Coxeter group $W$ with central longest element $w_0$.

\begin{Theorem}\label{thm:central}
  Suppose $W$ is a finite Coxeter group with central longest element
  $w_0$.  If $\lambda, \mu \in \Lambda(W)$ are such that
  $\Norm{\lambda} \not\equiv \Norm{\mu} \pmod 2$ then $e_{\mu}
  \Sigma(W) e_{\lambda} = 0$.
\end{Theorem}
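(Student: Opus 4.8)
The plan is to exhibit an explicit central element of $\Sigma(W)$ that separates the idempotents $e_\lambda$ according to the parity of $\Norm{\lambda}$, and then to read off the asserted vanishing from the Peirce decomposition. The natural candidate is the longest element $w_0$ itself. First I would observe that $w_0$ already lies in $\Sigma(W)$: since $Y_\emptyset = \DD^{-1}(S) = \{w_0\}$, we have $y_\emptyset = w_0^{-1} = w_0$. By the standing hypothesis that $w_0$ is central in $W$, this element is central in $\Q W$ and hence central in the subalgebra $\Sigma(W)$.

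The crucial step is to compute $w_0 = y_\emptyset$ in the basis $\{e_L : L \subseteq S\}$. Setting $K = \emptyset$ in the M\"obius formula $y_K = \sum_{J \supseteq K}(-1)^{\Size{J - K}} x_J$ gives $y_\emptyset = \sum_J (-1)^{\Size{J}} x_J$, and substituting $x_J = \sum_L m_{JL} e_L$ the coefficient of $e_L$ becomes $\sum_{J \supseteq L}(-1)^{\Size{J}} m_{JL} = \sum_{J \supseteq L}(-1)^{\Size{J}} \Size{X_J \cap X_L^{\sharp}}$, where I have used that $m_{JL} = 0$ unless $J \supseteq L$. Applying part~(i) of Lemma~\ref{la:y-sharp} with $K = L$, and then part~(iii), this coefficient collapses to $(-1)^{\Size{L}} \Size{Y_L \cap X_L^{\sharp}} = (-1)^{\Size{L}}$. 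Hence $w_0 = \sum_L (-1)^{\Size{L}} e_L = \sum_\lambda (-1)^{\Norm{\lambda}} e_\lambda$, since every $L \in \lambda$ satisfies $\Size{L} = \Norm{\lambda}$; write $z$ for this element.

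Finally I would run the Peirce argument. Fix $a \in \Sigma(W)$ and consider $e_\mu a e_\lambda$. Orthogonality of the idempotents gives $z e_\mu = (-1)^{\Norm{\mu}} e_\mu$ and $e_\lambda z = (-1)^{\Norm{\lambda}} e_\lambda$, so the centrality of $z$ yields $(-1)^{\Norm{\mu}} e_\mu a e_\lambda = z\, e_\mu a e_\lambda = e_\mu a e_\lambda\, z = (-1)^{\Norm{\lambda}} e_\mu a e_\lambda$. When $\Norm{\mu} \not\equiv \Norm{\lambda} \pmod 2$ the two signs are opposite, which forces $e_\mu a e_\lambda = 0$; as $a$ was arbitrary this gives $e_\mu \Sigma(W) e_\lambda = 0$.

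The hard part will be the identification $w_0 = \sum_L (-1)^{\Size{L}} e_L$: the group element $w_0$ and the combinatorially defined idempotent basis sit on opposite sides of a triangular change of basis, and it is precisely the M\"obius inversion of Lemma~\ref{la:y-sharp}(i) together with the single-point set $Y_L \cap X_L^{\sharp} = \{w_L w_0\}$ of part~(iii) that bridges the gap. I would emphasize that this identity in fact holds for \emph{every} finite Coxeter group; the centrality hypothesis is used only to promote $z = w_0$ from an element of $\Sigma(W)$ to a central one, which is exactly what the final Peirce argument consumes.
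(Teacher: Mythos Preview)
Your proof is correct and follows essentially the same route as the paper: you identify $w_0=y_\emptyset\in\Sigma(W)$, compute it in the $e_L$-basis via the M\"obius formula, \eqref{eq:star}, and Lemma~\ref{la:y-sharp}(i),(iii) to obtain $w_0=\sum_\lambda(-1)^{\Norm{\lambda}}e_\lambda$, and then use centrality of $w_0$ in the Peirce decomposition to force $e_\mu a e_\lambda=0$ when the parities differ. The paper's argument is the same, phrased with $e_\mu w_0 a w_0 e_\lambda$ in place of your comparison of $z\,e_\mu a e_\lambda$ with $e_\mu a e_\lambda\,z$; your remark that the identity $w_0=\sum_L(-1)^{\Size{L}}e_L$ holds without the centrality hypothesis is a nice clarifying addition.
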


\begin{proof}
The longest element $w_0$ is the unique element 
with descent set $S$ and therefore, $w_0 = y_{\emptyset} \in \Sigma(W)$.
Hence, using \eqref{eq:star}, Lemma~\ref{la:y-sharp} (i) and (iii),
\begin{align*}
  w_0 &= \sum_{K \subseteq S} (-1)^{\Size{K}} x_K \\
 & = \sum_{K} (-1)^{\Size{K}} \sum_L m_{KL} e_L \\
 & = \sum_L \sum_{K \supseteq L} (-1)^{\Size{K}}  \Size{X_K \cap X_L^{\smash{\sharp}}} e_L \\
 & = \sum_L (-1)^{\Size{L}}  \Size{Y_L \cap X_L^{\smash{\sharp}}} e_L \\
 & = \sum_L(-1)^{\Size{L}}  e_L \\
 & = \sum_{\lambda \in \Lambda} (-1)^{\Norm{\lambda}} e_{\lambda}.
\end{align*}
By our hypothesis $w_0$ is central in $W$, and so it follows that
\begin{align*}
  w_0 e_{\lambda} = e_{\lambda} w_0 = (-1)^{\Norm{\lambda}} e_{\lambda}  
\end{align*}
for all $\lambda \in \Lambda$.
Hence, if $a \in e_{\mu} \Sigma(W) e_{\lambda}$
and $(-1)^{\Norm{\lambda} + \Norm{\mu}} = -1$
then 
\begin{align*}
  a = e_{\mu} a e_{\lambda}
  = e_{\mu} w_0 a w_0 e_{\lambda}
  = (-1)^{\Norm{\mu}} e_{\mu} a (-1)^{\Norm{\lambda}} e_{\lambda}
  = - e_{\mu} a e_{\lambda} = -a,
\end{align*}
and thus $a = 0$.
\end{proof}

As a consequence, the algebra $\Sigma(W)$ is the direct sum
of subalgebras $\Sigma(W)^+$ and $\Sigma(W)^-$,
generated by the orthogonal central idempotents
\begin{align*}
  \epsilon^+ & = \tfrac12 (\id + w_0) = \sum_{\Norm{\lambda} \text{ even}} e_{\lambda}
\end{align*}
and 
\begin{align*}
\epsilon^- &= \tfrac12(\id - w_0) = \sum_{\Norm{\lambda} \text{ odd}} e_{\lambda}, 
\end{align*}
respectively.  We refer to $\Sigma(W)^+$ and $\Sigma(W)^-$
as the even and the odd part of $\Sigma(W)$.
In this article, where appropriate, we present results about $\Sigma(W)$
more compactly in terms of
the even part  $\Sigma(W)^+$ and the odd part $\Sigma(W)^-$.
For example, the quiver of $\Sigma(W)$ is 
described as union of the quivers of $\Sigma(W)^+$ and $\Sigma(W)^-$.
And the Cartan matrix of $\Sigma(W)$ is described in the form 
of two smaller Cartan matrices, 
thus omitting entries which are $0$
due to Theorem~\ref{thm:central}.

\subsection{Quiver presentations.}
The descent algebra $\Sigma(W)$ supports an algebra homomorphism
$\theta$ into the character ring $R(W)$, defined for $J \subseteq S$
by
\begin{align}
  \theta(x_J) = 1_{W_J}^W,
\end{align}
the permutation character of the action of $W$ on the cosets of the
parabolic subgroup $W_J$.  Due to Solomon~\cite{Solomon1976}, the
kernel of $\theta$ coincides with the Jacobsen radical of $\Sigma(W)$.
It follows that all simple modules of $\Sigma(W)$ are $1$-dimensional
and thus that $\Sigma(W)$ is a basic algebra.  As such it has a
presentation as path algebra of a quiver with relations.  Here, we present
such quiver presentations for the finite irreducible Coxeter groups
of exceptional or non-crystallographic type.

For a general introduction to quivers in the representation
theory of finite dimensional algebras we refer to the
book~\cite{AssemEtAl2006}.
Here, a quiver is
a directed multigraph $Q = (V, E)$, consisting of a vertex set $V$ and
an edges set $E$, together with two maps
$\iota, \tau \colon E \to V$, assigning to each edge $e \in E$
a source $\iota(e) \in V$ and a target $\tau(e) \in V$.
A path of length $\ell(a) = l$ in $Q$ is a pair 
\begin{align} \label{eq:path}
  a = (v; e_1, e_2, \dots, e_l)
\end{align}
consisting of a source $v \in V$ and a sequence of $l$ edges $e_1,
e_2, \dots, e_l \in E$ such that $\iota(e_1) = v$ and $\iota(e_i) =
\tau(e_{i-1})$ for $i = 2, \dots, l$.  Let $\AA$ be the set of all
paths in $Q$.  The vertices $v \in V$ can be identified with the paths
$(v; \emptyset)$ of length $0$, and the edges $e \in E$ can be
identified with the paths $(\iota(e); e)$ of length $1$.
Concatenation of paths defines a partial multiplication on
$\AA$ as
\begin{align*}
  (v; e_1, \dots, e_l) \circ (v'; e_1', \dots, e_{l'}')
= (v; e_1, \dots, e_l, e_1', \dots, e_{l'}'),
\end{align*}
provided that $\tau(e_l) = v'$.

The path algebra $A$ of the quiver $Q$ is defined
as
\begin{align}
  A = \Q[\AA], 
\end{align}
where $a \circ a' = 0$ if the product $a \circ a'$ is not defined in $\AA$,
and otherwise multiplication is extended by linearity from $\AA$. 

In recent work~\cite{pfeiffer-quiver}, the descent algebra is constructed as
a subquotient of the quiver algebra defined by the Hasse diagram
of the power set $\PP(S)$ with respect to reverse inclusion as follows.
For $L \subseteq S$ and $s \in S$, denote
\begin{align}
  L_s = L \setminus \{s\}.
\end{align}
A path in the quiver is a sequence $L \to L_s \to (L_s)_t \to \dots$,
for some subset $L \subseteq S$ and elements $s, t, \dotsc \in L$, which we
write as a pair $(L; s, t, \dots)$. 
The length of the path $(L; s, t, \dots)$
then is $\Size{\{s, t, \dots\}}$.
 For each such path and each $r \in S$, we define
\begin{align}
  (L; s, t, \dots).r = (L^d; s^d, t^d, \dots),
\end{align}
where $d = w_L w_M$ and $M = L \cup \{r\}$.  This sets up an action of
the free monoid $S^*$ on the set of all paths.  We denote by
\begin{align}
  [L; s, t, \dots] = (L; s, t, \dots).S^*
\end{align}
the orbit of the path $(L; s, t, \dots)$ under this action.  Such an
orbit is called a street.  The subspace spanned by the streets is in
fact \cite[Theorem 6.6]{pfeiffer-quiver} a subalgebra $\Xi$ of~$A$.

For a path $(L; s, t, \dots)$ of positive length, we define
\begin{align}
  \delta(L; s, t, \dots) = (L_s; t, \dots) - (L_s; t, \dots).s
\end{align}
and
\begin{align}
  \Delta(L; s, t, \dots)  = \delta^l(L; s, t, \dots),
\end{align}
if the path $(\delta(L; s, t, \dots)$ has length $l$.  Then $\Delta$
maps $A$ into $A_0$, the subspace of paths of length $0$.  If $A_0$ is
identified with $\Sigma(W)$ via $L \mapsto e_L$ then the restriction
of $\Delta$ to $\Xi$ is a surjective anti-homomorphism from $\Xi$ to
$\Sigma(W)$ thanks to \cite[Theorem 9.5]{pfeiffer-quiver}.  Moreover,
the vertex set $\Lambda$ of the quiver of $\Sigma(W)$ is the set of
$S^*$-orbits on $\PP(S)$, the paths of length $0$, and the edges of
the quiver are images under $\Delta$ of streets $[L; s, t, \dots]$.

An algorithm which selects a suitable subset of streets as 
images of the edges of the quiver 
and expresses relations in terms of this selection
has been formulated~\cite{pfeiffer-quiver}
and implemented in the \textsf{ZigZag} package~\cite{zigzag}.
In each case this selection provides one possible way to
identify the generators of the path algebra with specific elements of
the descent algebra $\Sigma(W)$.
In the following sections,
the results of running this algorithm on particular Coxeter groups
are presented in the form of diagrams and tables.  There 
we will use streets to label vertices and
edges of quivers.  

\subsection{Notation.}  
In order to keep the descriptions of
quivers short, we use various notational conventions for dealing with
parabolic subgroups, subsets of $S$ and the edges of the quiver.

The letter $S$ is reserved for the set of Coxeter generators of $W$,
which we identify with the integers $\{1, 2, \dots, n\}$ for $n =
\Size{S}$.  Since $n < 9$ in the remaining sections, we can omit curly
braces around and commas between elements of $S$ in the tables below.
E.g, the street $[\{1,2,3,5,6\}; 1, 6]$ will be written as $[12356;
16]$.  Also, for $i \in S$, we use $S_i$ as a shorthand for $S
\setminus \{i\}$.

We denote the conjugacy class of the parabolic subgroup $W_J$ by the
isomorphism type of $W_J$.  A name $X_{jkl\ldots}$ denotes a class of
parabolic subgroups of type $X_j \times A_k \times A_l \times \dots$.
(This naming convention covers all subgroups of an irreducible finite
Coxeter group $W$, since a parabolic subgroup of $W$ is a direct
product of irreducible finite Coxeter groups with at most one factor
not of type $A$.)  In case there are several classes of parabolic
subgroups of the same type $X_{jkl}$, we use the primed names
$X_{jkl}'$, $X_{jkl}''$, \ldots, in order to distinguish them.

In a quiver, multiple edges between the same two vertices will be distinguished
by using dotted arrows $\tA$, $\tB$, \ldots

\clearpage

\section{Type \texorpdfstring{$I_2(m)$}{I2}.} \label{sec:i2}

The Coxeter group $W$ of type $I_2(m)$, $m \geq 3$, has Coxeter diagram:
\[
      1 \stackrel{m}- 2
\]
The structure of the descent algebra of $W$ depends on whether $m$ is
even or odd.  The longest element $w_0$ is
central in $W$  if and only if $m$ is even.  In any case, the structure of these $4$-dimensional
algebras is not particularly complicated.  For completeness, we show
the details here in two columns, on the left for $m$ even and on the
right for $m$ odd.

\begin{figure}[htbp]
  \centering
  \includegraphics[width=.4\linewidth]{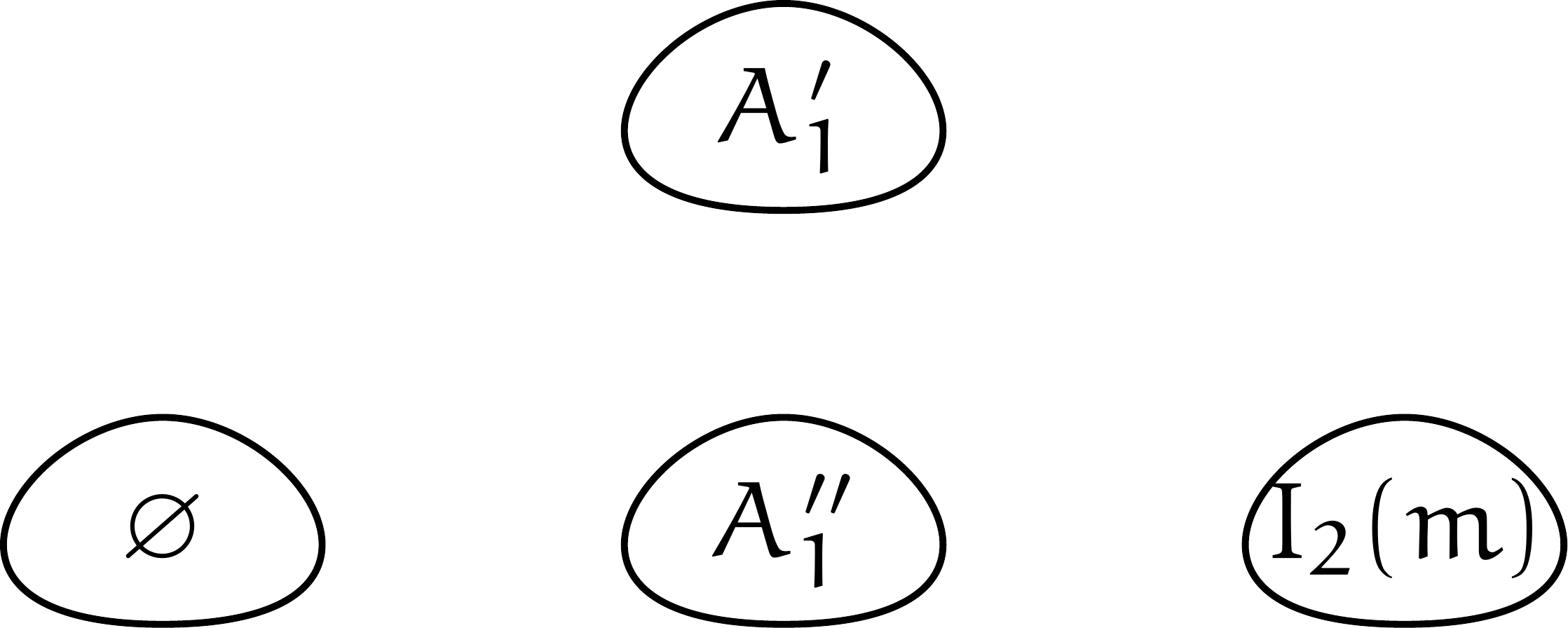}
  \qquad   \qquad  \qquad
  \includegraphics[width=.4\linewidth]{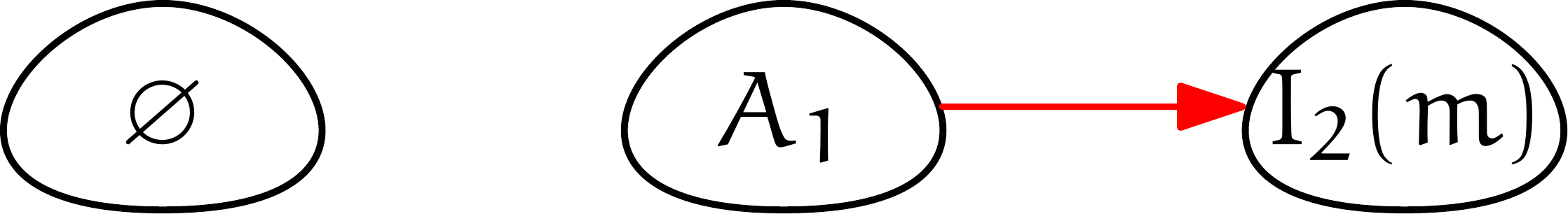}
  \caption{The quiver of type $I_2(m)$, $m$ even (left) and $m$ odd (right).}
  \label{fig:i2}
\end{figure}

\noindent
\begin{minipage}[t]{.45\linewidth}
  \paragraph{Quiver.}
  The quiver of the descent algebra $\Sigma(W)$, as shown in
  Figure~\ref{fig:i2}, has $4$ vertices and no edges, if $m$ is even.
\[
\begin{array}{|ccc|}
\hline
\vb & \textrm{type} & \lambda \\
\hline \hline
1. & \emptyset & [ \emptyset ] \\ \hline
2. & A_1' & [1] \\
3. & A_1'' & [2] \\ \hline
4. & I_2(m) & [12] \\
\hline
\end{array}
\]
\end{minipage}
\hfill
\begin{minipage}[t]{.45\linewidth}
  \paragraph{Quiver.}
  The quiver of the descent algebra $\Sigma(W)$, as shown in
  Figure~\ref{fig:i2}, has $3$ vertices and $1$ edge, if $m$ is odd.
\[
\begin{array}{|ccc|}
\hline
\vb & \textrm{type} & \lambda \\
\hline \hline
1. & \emptyset & [ \emptyset ] \\ \hline
2. & A_1 & [1] \\ \hline
3. & I_2(m) & [12] \\
\hline
\end{array}
\quad
\begin{array}{|cc|}
\hline
\eb & \alpha \\
\hline \hline
2 \to 3. & [12;1] \\
\hline
\end{array}
\]
\end{minipage}

\paragraph{Relations.}  There are no relations.  These descent
algebras are path algebras.

\medskip\noindent
\begin{minipage}[t]{.45\linewidth}
\paragraph{Projectives.}
\mbox{}\medskip

\noindent{\raggedright
$\begin{array}[b]{|c|}\hline
\emptyset\\\hline
\end{array}$,
$\begin{array}[b]{|c|}\hline
A_{1}'\\\hline
\end{array}$,
$\begin{array}[b]{|c|}\hline
A_{1}''\\\hline
\end{array}$,
$\begin{array}[b]{|c|}\hline
I_{2}(m)\\\hline
\end{array}$, if $m$ is even.
\par}
\end{minipage}
\hfill
\begin{minipage}[t]{.45\linewidth}
\paragraph{Projectives.}
\mbox{}\medskip

\noindent{\raggedright
$\begin{array}[b]{|c|}\hline
\emptyset\\\hline
\end{array}$,
$\begin{array}[b]{|c|}\hline
A_{1}\\\hline
\end{array}$,
$\begin{array}[b]{|c|}\hline
I_{2}(m)\\\hline
A_{1}\\\hline
\end{array}$,
if $m$ is odd.
\par}
\end{minipage}

\bigskip\noindent
\begin{minipage}[t]{.45\linewidth}
\paragraph{Cartan Matrix.}

\[
\begin{array}{r||r|r|}
\hline
\emptyset & 1 & .\\
\hline
I_{2}(m) & .  & 1\\
\hline
\end{array}
\qquad
\begin{array}{r||rr|}
\hline
A_{1}' & 1 & . \\
A_{1}'' & . & 1 \\
\hline
\end{array}
\]
\end{minipage}
\hfill
\begin{minipage}[t]{.45\linewidth}
\paragraph{Cartan Matrix.}

\[
\begin{array}{r||r|r|r|}
\hline
\emptyset & 1 & . & .\\
\hline
A_{1} & . & 1 & .\\
\hline
I_{2}(m) & . & 1 & 1\\
\hline
\end{array}
\]
\end{minipage}

\section{Type \texorpdfstring{$H_3$}{H3}.} \label{sec:h3}

The Coxeter group $W$ of type $H_3$ has Coxeter diagram:
\[
      1 \stackrel5- 2 - 3
\]
In this group, the longest element $w_0$ is central.

\paragraph{Quiver.}
The quiver has $6$ vertices and $2$ edges.
\[
\begin{array}{|ccc|}
\hline
\vb & \textrm{type} & \lambda \\
\hline \hline
1. & \emptyset & [ \emptyset ] \\
\hline
3. & A_{11}    & [13]          \\
4. & A_2       & [23] \\ 
5. & H_2       & [12] \\ 
\hline
\end{array}
\qquad \qquad
\begin{array}{|ccc|}
\hline
\vb & \textrm{type} & \lambda \\
\hline \hline
2. & A_1 & [1] \\
\hline
6. & H_3 & [S] \\  
\hline
\end{array}
\quad
\begin{array}{|cc|}
\hline
\eb & \alpha \\
\hline \hline
2 \tA 6. & [123;12] \\
2 \tB 6. & [123;31] \\
\hline
\end{array}
\]
Figure~\ref{fig:h3} shows the quiver.
\begin{figure}[htbp]
  \centering
  \includegraphics[width=.7\linewidth]{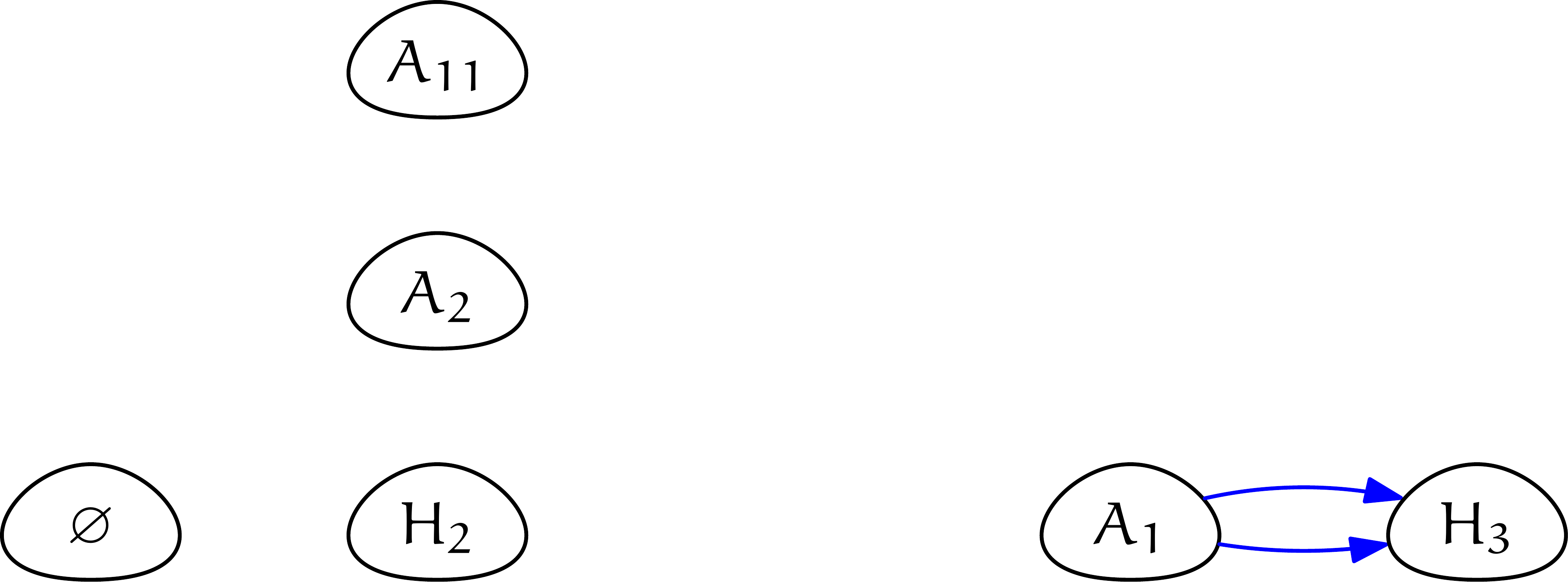}
  \caption{The quiver of type $H_3$.}
  \label{fig:h3}
\end{figure}

\paragraph{Relations.}  There are no relations.  This descent algebra
is a path algebra.

\paragraph{Projectives.}
\mbox{}

\noindent{\raggedright
$\begin{array}[b]{|c|}\hline
\emptyset\\\hline
\end{array}$,
$\begin{array}[b]{|c|}\hline
A_{1}\\\hline
\end{array}$,
$\begin{array}[b]{|c|}\hline
A_{11}\\\hline
\end{array}$,
$\begin{array}[b]{|c|}\hline
A_{2}\\\hline
\end{array}$,
$\begin{array}[b]{|c|}\hline
H_{2}\\\hline
\end{array}$,
$\begin{array}[b]{|c|}\hline
H_{3}\\\hline
(A_{1})^{2}\\\hline
\end{array}$.
\par}

\paragraph{Cartan Matrix.}

\[
\begin{array}{r||r|rrr|}
\hline
\emptyset & 1 & . & . & .\\
\hline
A_{11}    & . & 1 & . & .\\
A_{2}     & . & . & 1 & .\\
H_{2}     & . & . & . & 1\\
\hline
\end{array}
\qquad
\begin{array}{r||r|r|}
\hline
A_{1} & 1 & .\\
\hline
H_{3} & 2 & 1\\
\hline
\end{array}
\]

\section{Type \texorpdfstring{$H_4$}{H4}.} \label{sec:h4}

The Coxeter group $W$ of type $H_4$ has Coxeter diagram:
\[
      1 \stackrel5- 2 - 3 - 4
\]
In this group, the longest element $w_0$ is central.

\paragraph{Quiver.} The quiver, as shown in Figure~\ref{fig:h4}, has
$10$ vertices and $6$ edges.  Note that the even part is dual to the odd part.
\[
\begin{array}{|ccc|}
\hline
\vb & \textrm{type} & \lambda \\
\hline \hline
1.  & \emptyset & [\emptyset] \\ \hline
3.  & A_{11}    & [13] \\       
4.  & A_{2}     & [23] \\       
5.  & H_{2}     & [12] \\ \hline
10. & H_{4}     & [S] \\   
\hline
\end{array}
\quad
\begin{array}{|cc|}
\hline
\eb & \alpha \\
\hline \hline
3 \tA 10. & [S;23] \\
3 \tB 10. & [S;31] \\
4 \to 10. & [S;12] \\
\hline
\end{array}
\qquad \qquad
\begin{array}{|ccc|}
\hline
\vb & \textrm{type} & \lambda \\
\hline \hline
2.  & A_{1}     & [1] \\ \hline
6.  & A_{21}    & [134] \\ 
7.  & H_{21}    & [124] \\ 
8.  & A_{3}     & [234] \\ 
9.  & H_{3}     & [123] \\ 
\hline
\end{array}
\quad
\begin{array}{|cc|}
\hline
\eb & \alpha \\
\hline \hline
2 \to 8. & [234;23] \\
2 \tA 9. & [123;12] \\
2 \tB 9. & [123;31] \\
\hline
\end{array}
\]

\begin{figure}[htbp]
  \centering
  \includegraphics[width=.62\linewidth]{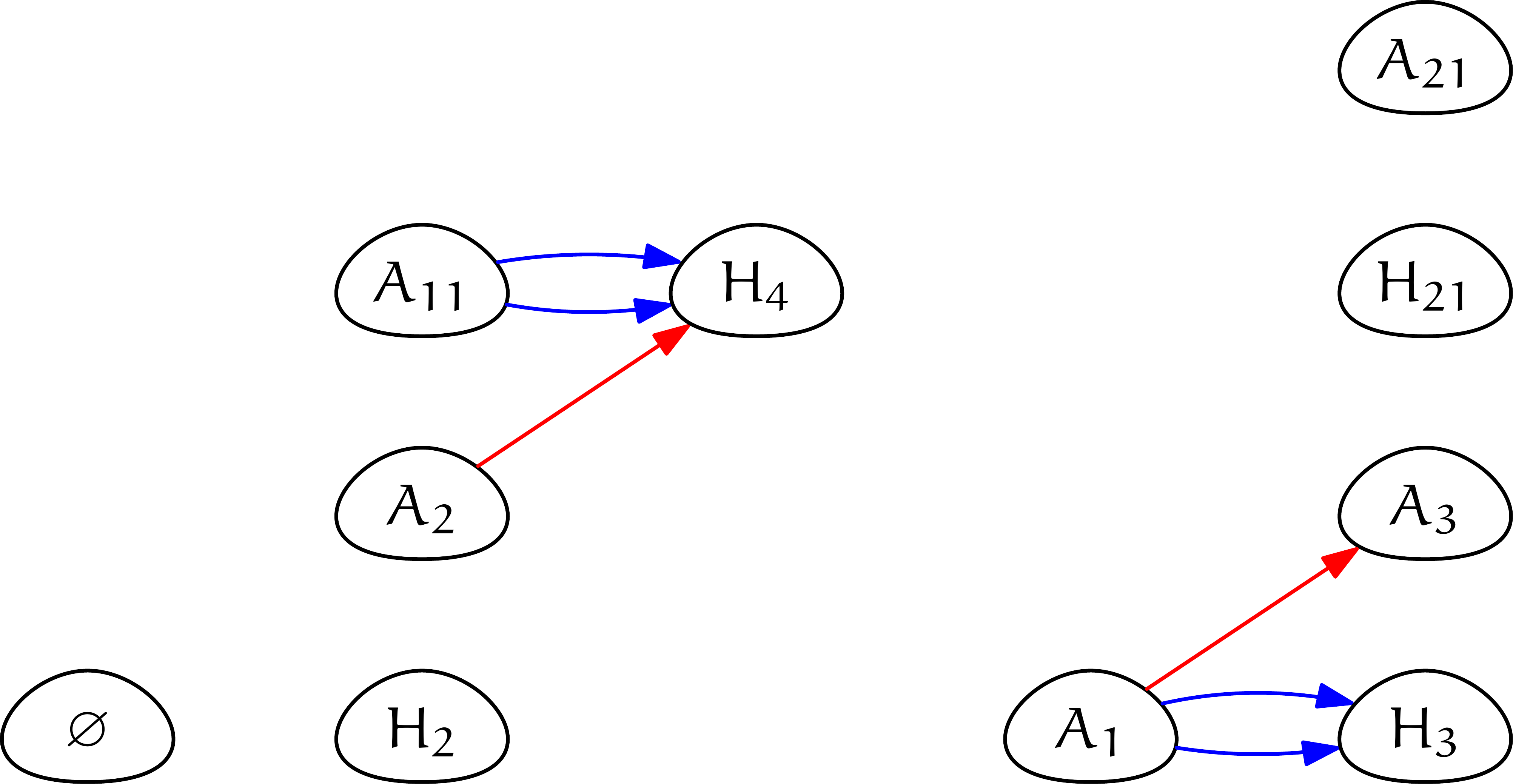}
  \caption{The quiver of type $H_4$.}
  \label{fig:h4}
\end{figure}

\paragraph{Relations.}  There are no relations.  This descent algebra
is a path algebra.

\paragraph{Projectives.}
\mbox{}

\noindent{\raggedright
$\begin{array}[b]{|c|}\hline
\emptyset\\\hline
\end{array}$,
$\begin{array}[b]{|c|}\hline
A_{1}\\\hline
\end{array}$,
$\begin{array}[b]{|c|}\hline
A_{11}\\\hline
\end{array}$,
$\begin{array}[b]{|c|}\hline
A_{2}\\\hline
\end{array}$,
$\begin{array}[b]{|c|}\hline
H_{2}\\\hline
\end{array}$,
$\begin{array}[b]{|c|}\hline
A_{21}\\\hline
\end{array}$,
$\begin{array}[b]{|c|}\hline
H_{21}\\\hline
\end{array}$,
$\begin{array}[b]{|c|}\hline
A_{3}\\\hline
A_{1}\\\hline
\end{array}$,
$\begin{array}[b]{|c|}\hline
H_{3}\\\hline
(A_{1})^{2}\\\hline
\end{array}$,
$\begin{array}[b]{|c|}\hline
H_{4}\\\hline
(A_{11})^{2}\, A_{2}\\\hline
\end{array}$.
\par}

\paragraph{Cartan Matrix.}

\[
\begin{array}{r||r|rrr|r|}
\hline
\emptyset & 1 & . & . & . & .\\
\hline
A_{11}    & . & 1 & . & . & .\\
A_{2}     & . & . & 1 & . & .\\
H_{2}     & . & . & . & 1 & .\\
\hline
H_{4}     & . & 2 & 1 & . & 1\\
\hline
\end{array}
\quad
\begin{array}{r||r|rrrr|}
\hline
A_{1}     & 1 & . & . & . & .\\
\hline
A_{21}    & . & 1 & . & . & .\\
H_{21}    & . & . & 1 & . & .\\
A_{3}     & 1 & . & . & 1 & .\\
H_{3}     & 2 & . & . & . & 1\\
\hline
\end{array}
\]

\section{Type \texorpdfstring{$F_4$}{F4}.} \label{sec:f4}

The Coxeter group $W$ of type $F_4$ has Coxeter diagram:
\[
      1 - 2 = 3 - 4
\]
In this group, the longest element $w_0$ is central.

\paragraph{Quiver.}
The quiver has $12$ vertices and $4$ edges.
\[
\begin{array}{|ccc|}
\hline
\vb & \textrm{type} & \lambda \\
\hline \hline
1. & \emptyset & [\emptyset] \\ \hline
4. & A_{11} & [13] \\          
5. & A_{2}' & [12] \\  
6. & A_{2}'' & [34] \\ 
7. & B_{2} & [23] \\ \hline
12. & F_{4} & [S] \\       
\hline
\end{array}
\quad
\begin{array}{|cc|}
\hline
\eb & \alpha \\
\hline \hline
4 \tA 12. & [S;23] \\
4 \tB 12. & [S;31] \\
\hline
\end{array}
\qquad\qquad
\begin{array}{|ccc|}
\hline
\vb & \textrm{type} & \lambda \\
\hline \hline
2. & A_{1}' & [1] \\
3. & A_{1}'' & [3] \\ \hline
8. & A_{21}' & [124] \\
 9. & A_{21}'' & [134] \\  
10. & B_{3}' & [123] \\    
11. & B_{3}'' & [234] \\   
\hline
\end{array}
\quad
\begin{array}{|cc|}
\hline
\eb & \alpha \\
\hline \hline
2 \to 10. & [123;31] \\
3 \to 11. & [234;23] \\
\hline
\end{array}
\]

\begin{figure}[htbp]
  \centering
  \includegraphics[width=.62\linewidth]{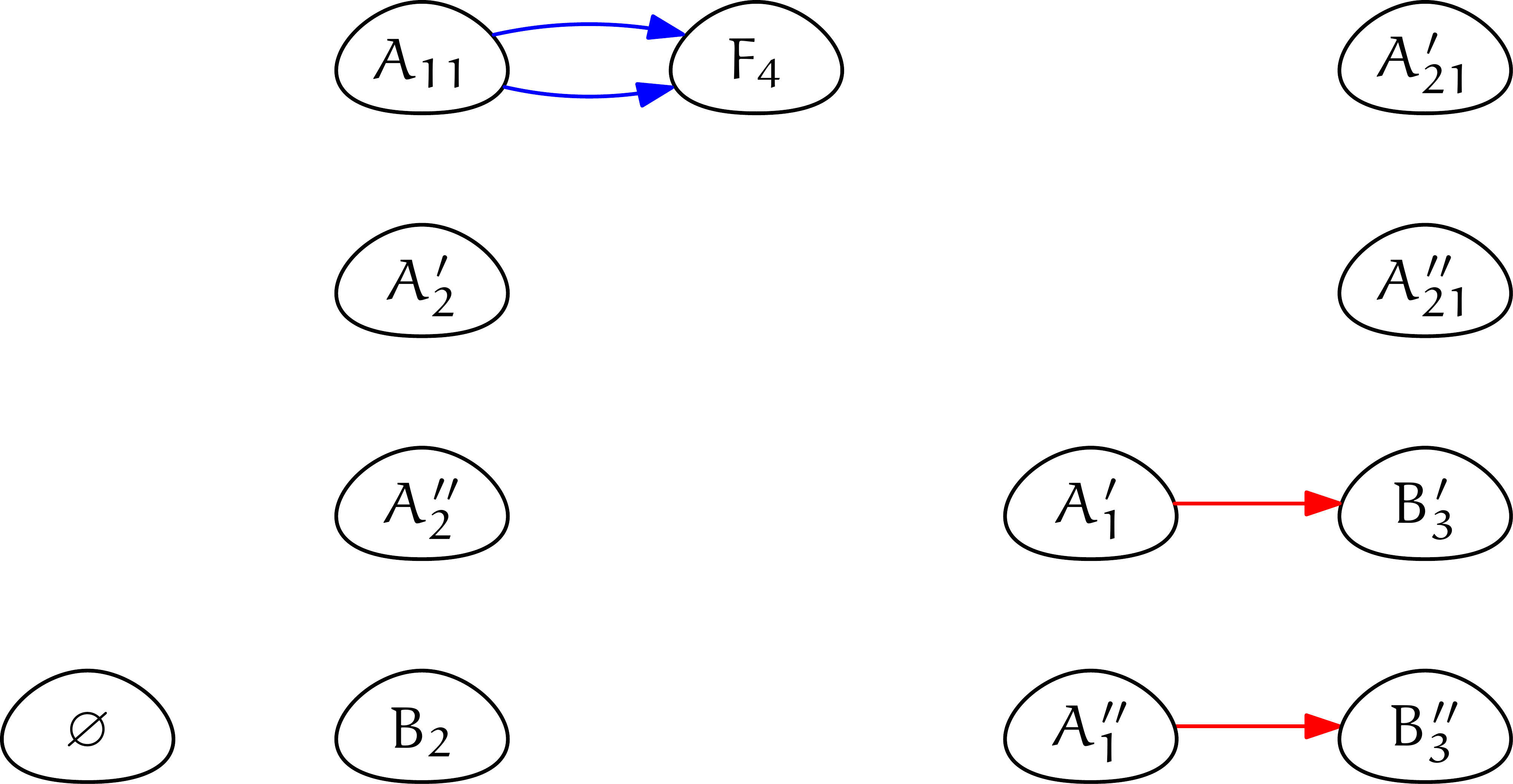}
  \caption{The quiver of type $F_4$.}
  \label{fig:f4}
\end{figure}

\paragraph{Relations.}  There are no relations.  This descent algebra
is a path algebra.

\paragraph{Projectives.}
\mbox{}

\noindent{\raggedright
$\begin{array}[b]{|c|}\hline
\emptyset\\\hline
\end{array}$,
$\begin{array}[b]{|c|}\hline
A_{1}'\\\hline
\end{array}$,
$\begin{array}[b]{|c|}\hline
A_{1}''\\\hline
\end{array}$,
$\begin{array}[b]{|c|}\hline
A_{11}\\\hline
\end{array}$,
$\begin{array}[b]{|c|}\hline
A_{2}'\\\hline
\end{array}$,
$\begin{array}[b]{|c|}\hline
A_{2}''\\\hline
\end{array}$,
$\begin{array}[b]{|c|}\hline
B_{2}\\\hline
\end{array}$,
$\begin{array}[b]{|c|}\hline
A_{21}'\\\hline
\end{array}$,
$\begin{array}[b]{|c|}\hline
A_{21}''\\\hline
\end{array}$,
$\begin{array}[b]{|c|}\hline
B_{3}'\\\hline
A_{1}'\\\hline
\end{array}$,
$\begin{array}[b]{|c|}\hline
B_{3}''\\\hline
A_{1}''\\\hline
\end{array}$,
$\begin{array}[b]{|c|}\hline
F_{4}\\\hline
(A_{11})^{2}\\\hline
\end{array}$.
\par}

\paragraph{Cartan Matrix.}

\[
\begin{array}{r||r|rrrr|r|}
\hline
\emptyset & 1 & . & . & . & . & .\\
\hline
A_{11}    & . & 1 & . & . & . & .\\
A_{2}'    & . & . & 1 & . & . & .\\
A_{2}''   & . & . & . & 1 & . & .\\
B_{2}     & . & . & . & . & 1 & .\\
\hline
F_{4}     & . & 2 & . & . & . & 1\\
\hline
\end{array}
\quad
\begin{array}{r||rr|rrrr|}
\hline
A_{1}'    & 1 & . & . & . & . & .\\
A_{1}''   & . & 1 & . & . & . & .\\
\hline
A_{21}'   & . & . & 1 & . & . & .\\
A_{21}''  & . & . & . & 1 & . & .\\
B_{3}'    & 1 & . & . & . & 1 & .\\
B_{3}''   & . & 1 & . & . & . & 1\\
\hline
\end{array}
\]

\section{Type \texorpdfstring{$E_6$}{E6}.} \label{sec:e6}

The Coxeter group $W$ of type $E_6$ has Coxeter diagram:
\begin{align*}\baselineskip0pt
  1 - 3 - \vbox{\hbox{$2$}\hbox{\,$|$}\hbox{$4$}} - 5 - 6
\end{align*}

\paragraph{Quiver.} The quiver, as shown in Figure~\ref{fig:e6}, has
$17$ vertices and $19$ edges.
\[
\begin{array}{|ccc|ccc|ccc|ccc|}
\hline
\vb & \textrm{type} & \lambda &
\vb & \textrm{type} & \lambda \\
\hline \hline
1. & \emptyset & [\emptyset] & 10. & A_{31} & [1245] \\  \cline{1-3}
2. & A_{1} & [1] &             11. & A_{4} & [1345] \\  \cline{1-3} 
3. & A_{11} & [12] &           12. & D_{4} & [2345] \\  \cline{4-6}
4. & A_{2} & [13] &            13. & A_{221} & [S_4] \\  \cline{1-3}
5. & A_{111} & [146] &         14. & A_{41} & [S_5] \\  
6. & A_{21} & [124] &          15. & A_{5} & [S_2] \\   
7. & A_{3} & [134] &           16. & D_{5} & [S_6] \\   \hline
8. & A_{211} & [1246] &        17. & E_{6} & [S] \\     
9. & A_{22} & [1356] &         & & \\
\hline
\end{array}
\quad
\begin{array}{|cc|cc|cc|cc|}
\hline
\eb & \alpha &
\eb & \alpha \\
\hline \hline
2 \to 7. & [134;13] &   8 \to 13. & [S_4;1] \\    
3 \to 6. & [123;1] &    8 \to 14. & [S_5;3] \\   
4 \to 11. & [1234;13] & 8 \to 17. & [S;41] \\     
5 \to 13. & [S_4;16] &  10 \to 14. & [S_5;1] \\   
5 \to 14. & [S_5;34] &  10 \to 15. & [S_2;3] \\  
5 \to 16. & [S_6;41] &  11 \to 15. & [S_2;1] \\  
6 \to 9. & [1356;1] &   11 \to 16. & [S_6;2] \\  
6 \to 10. & [1245;2] &  14 \to 17. & [S;3] \\    
6 \to 11. & [1234;3] &  16 \to 17. & [S;1] \\    
7 \to 11. & [1234;1] &  & \\                     
\hline
\end{array}
\]

\begin{figure}[htbp]
  \centering
  \includegraphics[width=.9\linewidth]{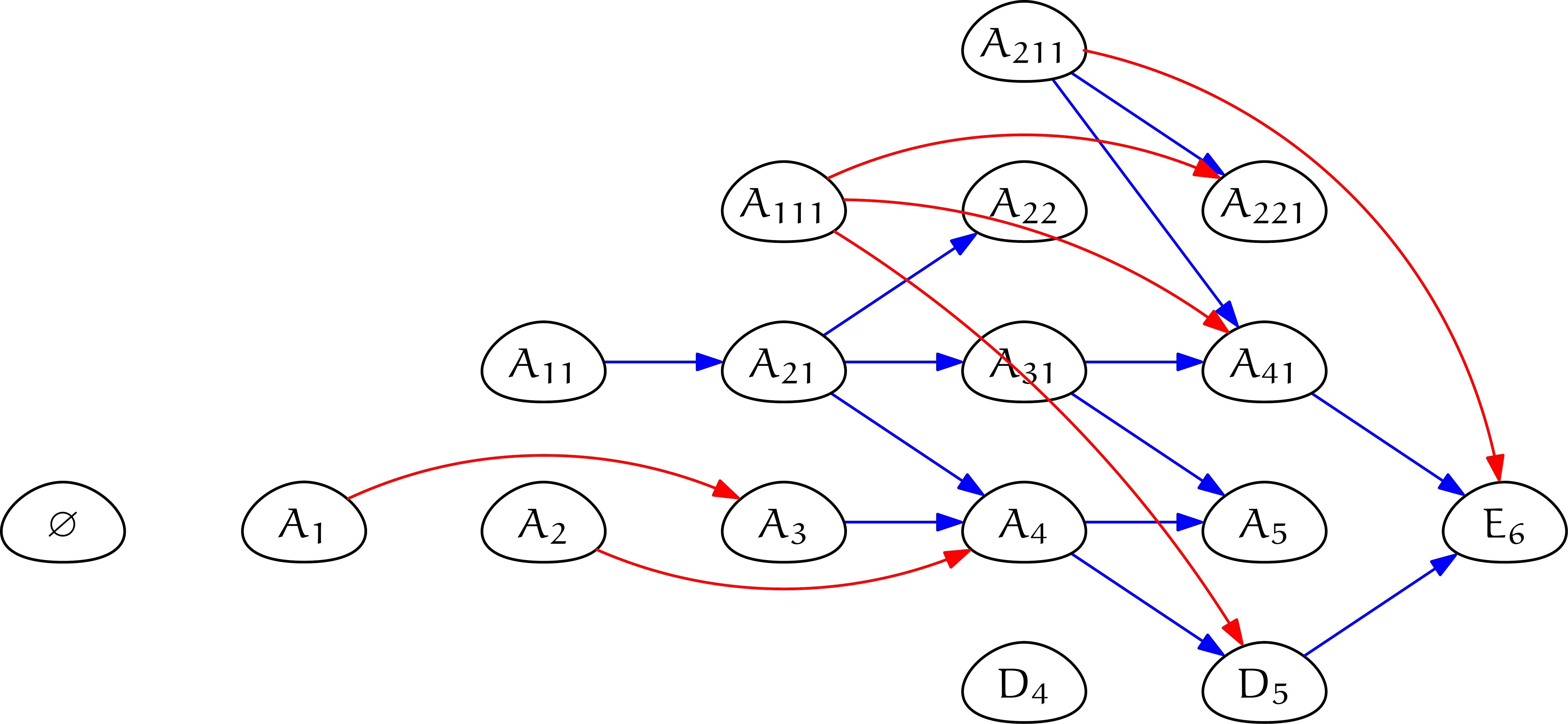}
  \caption{The quiver of type $E_6$.}
  \label{fig:e6}
\end{figure}

\paragraph{Relations.}  There are two relations, one on paths of length $2$ and one on paths of length~$3$:
\begin{align*}
(5 \to 16 \to 17) &= -2(5 \to 14 \to 17), \\
(3 \to 6 \to 11 \to 15)  &= -(3 \to 6 \to 10 \to 15). 
\end{align*}

\paragraph{Projectives.}
\mbox{}

\noindent{\raggedright
$\begin{array}[b]{|c|}\hline
\emptyset\\\hline
\end{array}$,
$\begin{array}[b]{|c|}\hline
A_{1}\\\hline
\end{array}$,
$\begin{array}[b]{|c|}\hline
A_{11}\\\hline
\end{array}$,
$\begin{array}[b]{|c|}\hline
A_{2}\\\hline
\end{array}$,
$\begin{array}[b]{|c|}\hline
A_{111}\\\hline
\end{array}$,
$\begin{array}[b]{|c|}\hline
A_{21}\\\hline
A_{11}\\\hline
\end{array}$,
$\begin{array}[b]{|c|}\hline
A_{3}\\\hline
A_{1}\\\hline
\end{array}$,
$\begin{array}[b]{|c|}\hline
A_{211}\\\hline
\end{array}$,
$\begin{array}[b]{|c|}\hline
A_{22}\\\hline
A_{21}\\\hline
A_{11}\\\hline
\end{array}$,
$\begin{array}[b]{|c|}\hline
A_{31}\\\hline
A_{21}\\\hline
A_{11}\\\hline
\end{array}$,
$\begin{array}[b]{|c|}\hline
A_{4}\\\hline
A_{2}\, A_{21}\, A_{3}\\\hline
A_{1}\, A_{11}\\\hline
\end{array}$,
$\begin{array}[b]{|c|}\hline
D_{4}\\\hline
\end{array}$,
$\begin{array}[b]{|c|}\hline
A_{221}\\\hline
A_{111}\, A_{211}\\\hline
\end{array}$,
$\begin{array}[b]{|c|}\hline
A_{41}\\\hline
A_{111}\, A_{211}\, A_{31}\\\hline
A_{21}\\\hline
A_{11}\\\hline
\end{array}$,
$\begin{array}[b]{|c|}\hline
A_{5}\\\hline
A_{31}\, A_{4}\\\hline
A_{2}\, (A_{21})^{2}\, A_{3}\\\hline
A_{1}\, A_{11}\\\hline
\end{array}$,
$\begin{array}[b]{|c|}\hline
D_{5}\\\hline
A_{111}\, A_{4}\\\hline
A_{2}\, A_{21}\, A_{3}\\\hline
A_{1}\, A_{11}\\\hline
\end{array}$,
$\begin{array}[b]{|c|}\hline
E_{6}\\\hline
A_{211}\, A_{41}\, D_{5}\\\hline
A_{111}\, A_{211}\, A_{31}\, A_{4}\\\hline
A_{2}\, (A_{21})^{2}\, A_{3}\\\hline
A_{1}\, (A_{11})^{2}\\\hline
\end{array}$.
\par}

\bigskip\noindent
Note that the projective module $E_6$ contains
copies of the simple module $A_{211}$ in the
second and the third layer of its Loewy series.
These correspond to the images under $\Delta$ of the paths
$[S;41]$ and $[S;3] \circ [S_5;3]$.

\paragraph{Cartan Matrix.}

\[
\begin{array}{r||r|r|rr|rrr|rrrrr|rrrr|r|}
\hline
\emptyset & 
1 & . & . & . & . & . & . & . & . & . & . & . & . & . & . & . & .\\
\hline
A_{1} & . & 1 & . & . & . & . & . & . & . & . & . & . & . & . & . & . & .\\
\hline
A_{11} & . & . & 1 & . & . & . & . & . & . & . & . & . & . & . & . & . & .\\
A_{2} & . & . & . & 1 & . & . & . & . & . & . & . & . & . & . & . & . & .\\
\hline
A_{111} & . & . & . & . & 1 & . & . & . & . & . & . & . & . & . & . & . & .\\
A_{21} & . & . & 1 & . & . & 1 & . & . & . & . & . & . & . & . & . & . & .\\
A_{3} & . & 1 & . & . & . & . & 1 & . & . & . & . & . & . & . & . & . & .\\
\hline
A_{211} & . & . & . & . & . & . & . & 1 & . & . & . & . & . & . & . & . & .\\
A_{22} & . & . & 1 & . & . & 1 & . & . & 1 & . & . & . & . & . & . & . & .\\
A_{31} & . & . & 1 & . & . & 1 & . & . & . & 1 & . & . & . & . & . & . & .\\
A_{4} & . & 1 & 1 & 1 & . & 1 & 1 & . & . & . & 1 & . & . & . & . & . & .\\
D_{4} & . & . & . & . & . & . & . & . & . & . & . & 1 & . & . & . & . & .\\
\hline
A_{221} & . & . & . & . & 1 & . & . & 1 & . & . & . & . & 1 & . & . & . & .\\
A_{41} & . & . & 1 & . & 1 & 1 & . & 1 & . & 1 & . & . & . & 1 & . & . & .\\
A_{5} & . & 1 & 1 & 1 & . & 2 & 1 & . & . & 1 & 1 & . & . & . & 1 & . & .\\
D_{5} & . & 1 & 1 & 1 & 1 & 1 & 1 & . & . & . & 1 & . & . & . & . & 1 & .\\
\hline
E_{6} & . & 1 & 2 & 1 & 1 & 2 & 1 & 2 & . & 1 & 1 & . & . & 1 & . & 1 & 1\\
\hline
\end{array}
\]

\clearpage

\section{Type \texorpdfstring{$E_7$}{E7}.} \label{sec:e7}

The Coxeter group $W$ of type $E_7$ has Coxeter diagram:
\begin{align*}\baselineskip0pt
  1 - 3 - \vbox{\hbox{$2$}\hbox{\,$|$}\hbox{$4$}} - 5 - 6 - 7
\end{align*}
In this group, the longest element $w_0$ is central.

\paragraph{Quiver.}  There are 32 vertices and 62 edges in total.
The quiver of the even part,
as shown in 
Figure~\ref{fig:e7e},
has $17$ vertices and $33$ edges.
\begin{figure}[htbp]
  \centering
  \includegraphics[width=.9\linewidth]{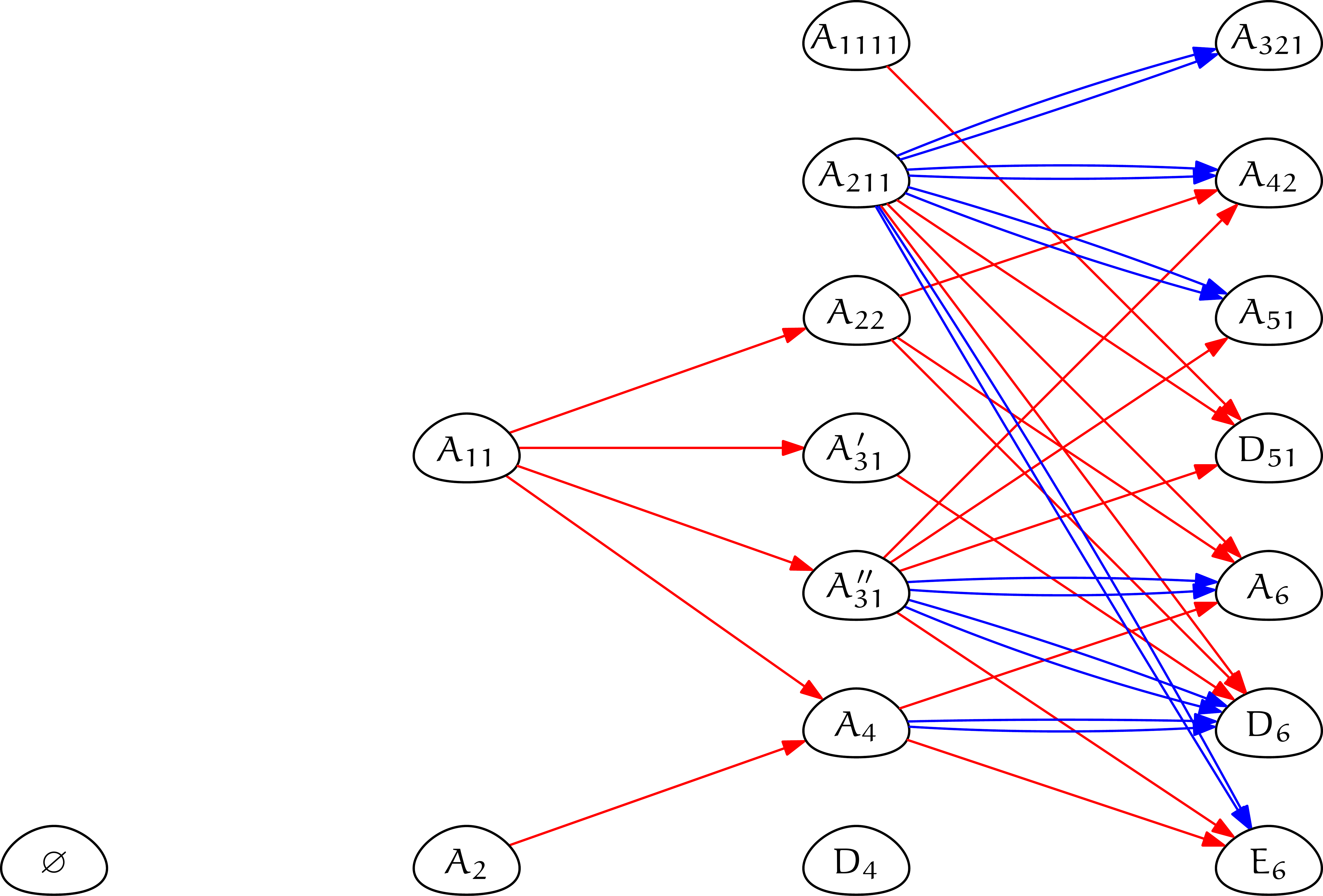}
  \caption{The even part of the quiver of type $E_7$.}
  \label{fig:e7e}
\end{figure}
\[
\begin{array}{|ccc|
ccc|ccc|ccc|
}
\hline
\vb & \textrm{type} & \lambda &
\vb & \textrm{type} & \lambda &
\vb & \textrm{type} & \lambda &
\vb & \textrm{type} & \lambda \\
\hline \hline
1. & \emptyset & [\emptyset] & 11. & A_{22} & [2467] &   25. & A_{321} & [S_4] & 30. & D_{6} & [S_1] \\
\cline{1-3}
3. & A_{11} & [12] &           12. & A_{31}' & [2457] &  26. & A_{42} & [S_5] &  31. & E_{6} & [S_7] \\
4. & A_{2} & [24] &            13. & A_{31}'' & [1245] & 27. & A_{51} & [S_3] &  & & \\
\cline{1-3}
9. & A_{1111} & [1257] &       14. & A_{4} & [1345] &    28. & D_{51} & [S_6] &  & & \\
10. & A_{211} & [1235] &       15. & D_{4} & [2345] &    29. & A_{6} & [S_2] &   & & \\
\hline
\end{array}
\]
\[
\begin{array}{|cc|
cc|cc|cc|
}
\hline
\eb & \alpha &
\eb & \alpha &
\eb & \alpha &
\eb & \alpha \\
\hline \hline
3 \to 11. & [1356;15] &  10 \tB 26. & [S_5;36] & 11 \to 29. & [S_2;45] & 13 \tB 30. & [S_1;56] \\
3 \to 12. & [2457;24] &  10 \tA 27. & [S_3;45] & 11 \to 30. & [S_1;52] & 13 \to 31. & [S_7;32] \\
3 \to 13. & [1245;24] &  10 \tB 27. & [S_3;52] & 12 \to 30. & [S_1;34] & 14 \to 29. & [S_2;13] \\
3 \to 14. & [1234;32] &  10 \to 28. & [S_6;23] & 13 \to 26. & [S_5;16] & 14 \tA 30. & [S_1;23] \\
4 \to 14. & [1234;12] &  10 \to 29. & [S_2;35] & 13 \to 27. & [S_3;24] & 14 \tB 30. & [S_1;32] \\
9 \to 28. & [S_6;41]  &  10 \to 30. & [S_1;45] & 13 \to 28. & [S_6;21] & 14 \to 31. & [S_7;12] \\
10 \tA 25. & [S_4;15] &  10 \tA 31. & [S_7;34] & 13 \tA 29. & [S_2;14] & & \\
10 \tB 25. & [S_4;56] &  10 \tB 31. & [S_7;41] & 13 \tB 29. & [S_2;41] & & \\
10 \tA 26. & [S_5;32] &  11 \to 26. & [S_5;12] & 13 \tA 30. & [S_1;24] & & \\
\hline
\end{array}
\]

The quiver of the odd part,
as shown in 
Figure~\ref{fig:e7o},
has $15$ vertices and $29$ edges.
\begin{figure}[htbp]
  \centering
  \includegraphics[width=.9\linewidth]{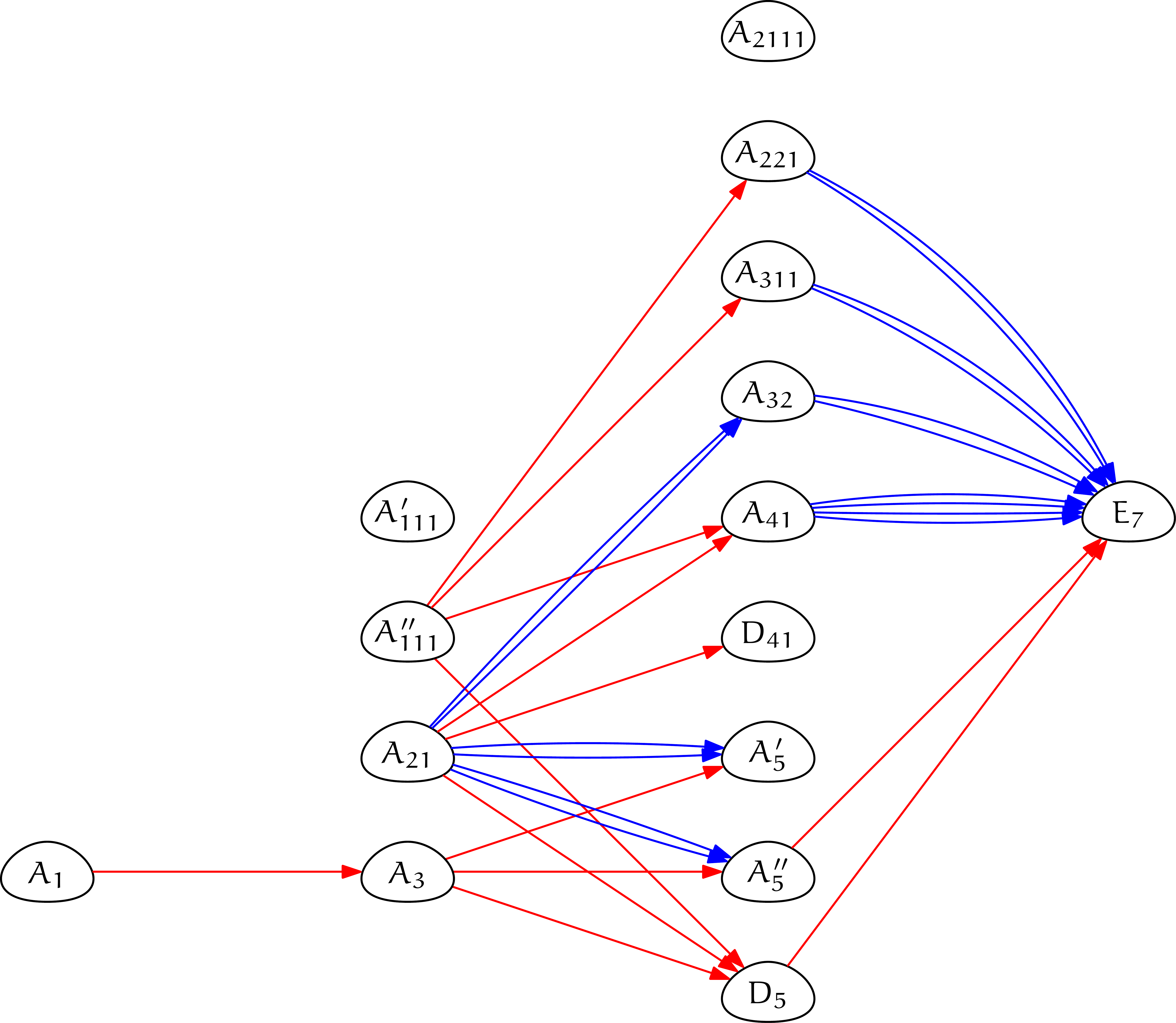}
  \caption{The odd part of the quiver of type $E_7$.}
  \label{fig:e7o}
\end{figure}
\[
\begin{array}{|ccc|
ccc|ccc|ccc|
}
\hline
\vb & \textrm{type} & \lambda &
\vb & \textrm{type} & \lambda &
\vb & \textrm{type} & \lambda &
\vb & \textrm{type} & \lambda \\
\hline \hline
2. & A_{1} & [1] &       8. & A_{3} & [234] &       19. & A_{32} & [13467] & 23. & A_{5}'' & [34567] \\ \cline{1-6}
5. & A_{111}' & [257] &  16. & A_{2111} & [12357] & 20. & A_{41} & [12347] & 24. & D_{5} & [23456] \\  \cline{10-12}
6. & A_{111}'' & [127] & 17. & A_{221} & [12367] &  21. & D_{41} & [23457] & 32. & E_{7} & [S] \\ 
7. & A_{21} & [123] &    18. & A_{311} & [23567] &  22. & A_{5}' & [24567] & & & \\
\hline
\end{array}
\]
\[
\begin{array}{|cc|
cc|cc|cc|
}
\hline
\eb & \alpha &
\eb & \alpha &
\eb & \alpha &
\eb & \alpha \\
\hline \hline
2 \to 8. & [134;13] &    7 \to 21. & [23457;23] & 8 \to 24. & [12345;21] & 20 \tB 32. & [S;32] \\ 
6 \to 17. & [12356;15] & 7 \tA 22. & [24567;45] & 17 \tA 32. & [S;45] &    20 \tC 32. & [S;56] \\ 
6 \to 18. & [12457;24] & 7 \tB 22. & [24567;52] & 17 \tB 32. & [S;53] &    20 \tD 32. & [S;62] \\ 
6 \to 20. & [12346;32] & 7 \tA 23. & [13456;34] & 18 \tA 32. & [S;34] &    23 \to 32. & [S;21] \\ 
6 \to 24. & [12345;41] & 7 \tB 23. & [13456;41] & 18 \tB 32. & [S;41] &   24 \to 32. & [S;71] \\ 
7 \tA 19. & [13467;13] & 7 \to 24. & [12345;23] & 19 \tA 32. & [S;24] &   & \\                   
7 \tB 19. & [13467;16] & 8 \to 22. & [24567;24] & 19 \tB 32. & [S;51] &   & \\                   
7 \to 20. & [12346;12] & 8 \to 23. & [13456;13] & 20 \tA 32. & [S;23] &   & \\                   
 \hline
\end{array}
\]

\paragraph{Relations.}  The presentation needs $13$ relations. There are $6$ relations on the even part:
\begin{align*}
(3 \to 13 \to 26) &= -\tfrac12(3 \to 11 \to 26), &
(3 \to 13 \tB 29) &= \tfrac12(3 \to 11 \to 29), \\
(3 \to 14 \to 29) &= -(3 \to 13 \tA 29), &
(3 \to 13 \tB 30) &= -\tfrac12(3 \to 11 \to 30), \\
(3 \to 14 \tA 30) &= -(3 \to 13 \tA 30), &
(3 \to 14 \tB 30) &= -(3 \to 12 \to 30).
\end{align*}
And there are $7$ relations on the odd part:
\begin{align*}
(6 \to 18 \tB 32) &= \tfrac12(6 \to 17 \tA 32), \\
(6 \to 20 \tB 32) &= -(6 \to 18 \tA 32), \\
(6 \to 20 \tC 32) &= -\tfrac12(6 \to 17 \tB 32), \\
(6 \to 24 \to 32) &= (6 \to 20 \tD 32) + (6 \to 20 \tC 32) - (6 \to 20 \tB 32) + (6 \to 20 \tA 32), \\
(7 \to 20 \tC 32) &= -(7 \tB 19 \tB 32), \\
(7 \tA 23 \to 32) &= (7 \tB 19 \tA 32) - (7 \to 20 \tA 32), \\
(7 \tB 23 \to 32) &= (7 \tB 19 \tA 32) - (7 \tA 19 \tA 32).
\end{align*}

\paragraph{Projectives.}
\mbox{}

\noindent
{\raggedright
$\begin{array}[b]{|c|}\hline
\emptyset\\\hline
\end{array}$,
$\begin{array}[b]{|c|}\hline
A_{1}\\\hline
\end{array}$,
$\begin{array}[b]{|c|}\hline
A_{11}\\\hline
\end{array}$,
$\begin{array}[b]{|c|}\hline
A_{2}\\\hline
\end{array}$,
$\begin{array}[b]{|c|}\hline
A_{111}'\\\hline
\end{array}$,
$\begin{array}[b]{|c|}\hline
A_{111}''\\\hline
\end{array}$,
$\begin{array}[b]{|c|}\hline
A_{21}\\\hline
\end{array}$,
$\begin{array}[b]{|c|}\hline
A_{3}\\\hline
A_{1}\\\hline
\end{array}$,
$\begin{array}[b]{|c|}\hline
A_{1111}\\\hline
\end{array}$,
$\begin{array}[b]{|c|}\hline
A_{211}\\\hline
\end{array}$,
$\begin{array}[b]{|c|}\hline
A_{22}\\\hline
A_{11}\\\hline
\end{array}$,
$\begin{array}[b]{|c|}\hline
A_{31}'\\\hline
A_{11}\\\hline
\end{array}$,
$\begin{array}[b]{|c|}\hline
A_{31}''\\\hline
A_{11}\\\hline
\end{array}$,
$\begin{array}[b]{|c|}\hline
A_{4}\\\hline
A_{11}\, A_{2}\\\hline
\end{array}$,
$\begin{array}[b]{|c|}\hline
D_{4}\\\hline
\end{array}$,
$\begin{array}[b]{|c|}\hline
A_{2111}\\\hline
\end{array}$,
$\begin{array}[b]{|c|}\hline
A_{221}\\\hline
A_{111}''\\\hline
\end{array}$,
$\begin{array}[b]{|c|}\hline
A_{311}\\\hline
A_{111}''\\\hline
\end{array}$,
$\begin{array}[b]{|c|}\hline
A_{32}\\\hline
(A_{21})^{2}\\\hline
\end{array}$,
$\begin{array}[b]{|c|}\hline
A_{41}\\\hline
A_{111}''\, A_{21}\\\hline
\end{array}$,
$\begin{array}[b]{|c|}\hline
D_{41}\\\hline
A_{21}\\\hline
\end{array}$,
$\begin{array}[b]{|c|}\hline
A_{5}'\\\hline
(A_{21})^{2}\, A_{3}\\\hline
A_{1}\\\hline
\end{array}$,
$\begin{array}[b]{|c|}\hline
A_{5}''\\\hline
(A_{21})^{2}\, A_{3}\\\hline
A_{1}\\\hline
\end{array}$,
$\begin{array}[b]{|c|}\hline
D_{5}\\\hline
A_{111}''\, A_{21}\, A_{3}\\\hline
A_{1}\\\hline
\end{array}$,
$\begin{array}[b]{|c|}\hline
A_{321}\\\hline
(A_{211})^{2}\\\hline
\end{array}$,
$\begin{array}[b]{|c|}\hline
A_{42}\\\hline
(A_{211})^{2}\, A_{22}\, A_{31}''\\\hline
A_{11}\\\hline
\end{array}$,
$\begin{array}[b]{|c|}\hline
A_{51}\\\hline
(A_{211})^{2}\, A_{31}''\\\hline
A_{11}\\\hline
\end{array}$,
$\begin{array}[b]{|c|}\hline
D_{51}\\\hline
A_{1111}\, A_{211}\, A_{31}''\\\hline
A_{11}\\\hline
\end{array}$,
$\begin{array}[b]{|c|}\hline
A_{6}\\\hline
A_{211}\, A_{22}\, (A_{31}'')^{2}\, A_{4}\\\hline
(A_{11})^{2}\, A_{2}\\\hline
\end{array}$,
$\begin{array}[b]{|c|}\hline
D_{6}\\\hline
A_{211}\, A_{22}\, A_{31}'\, (A_{31}'')^{2}\, (A_{4})^{2}\\\hline
(A_{11})^{3}\, (A_{2})^{2}\\\hline
\end{array}$,
$\begin{array}[b]{|c|}\hline
E_{6}\\\hline
(A_{211})^{2}\, A_{31}''\, A_{4}\\\hline
(A_{11})^{2}\, A_{2}\\\hline
\end{array}$,
$\begin{array}[b]{|c|}\hline
E_{7}\\\hline
(A_{221})^{2}\, (A_{311})^{2}\, (A_{32})^{2}\, (A_{41})^{4}\, A_{5}''\, D_{5}\\\hline
(A_{111}'')^{5}\, (A_{21})^{8}\, (A_{3})^{2}\\\hline
(A_{1})^{2}\\\hline
\end{array}$.
\par}

\paragraph{Cartan Matrix.}

\[
\begin{array}{r||r|rr|rrrrrrr|rrrrrrr|}
\hline
\emptyset & 1 & . & . & . & . & . & . & . & . & . & . & . & . & . & . & . & .\\
\hline
A_{11}    & . & 1 & . & . & . & . & . & . & . & . & . & . & . & . & . & . & .\\
A_{2}     & . & . & 1 & . & . & . & . & . & . & . & . & . & . & . & . & . & .\\
\hline
A_{1111}  & . & . & . & 1 & . & . & . & . & . & . & . & . & . & . & . & . & .\\
A_{211}   & . & . & . & . & 1 & . & . & . & . & . & . & . & . & . & . & . & .\\
A_{22}    & . & 1 & . & . & . & 1 & . & . & . & . & . & . & . & . & . & . & .\\
A_{31}'   & . & 1 & . & . & . & . & 1 & . & . & . & . & . & . & . & . & . & .\\
A_{31}''  & . & 1 & . & . & . & . & . & 1 & . & . & . & . & . & . & . & . & .\\
A_{4}     & . & 1 & 1 & . & . & . & . & . & 1 & . & . & . & . & . & . & . & .\\
D_{4}     & . & . & . & . & . & . & . & . & . & 1 & . & . & . & . & . & . & .\\
\hline
A_{321}   & . & . & . & . & 2 & . & . & . & . & . & 1 & . & . & . & . & . & .\\
A_{42}    & . & 1 & . & . & 2 & 1 & . & 1 & . & . & . & 1 & . & . & . & . & .\\
A_{51}    & . & 1 & . & . & 2 & . & . & 1 & . & . & . & . & 1 & . & . & . & .\\
D_{51}    & . & 1 & . & 1 & 1 & . & . & 1 & . & . & . & . & . & 1 & . & . & .\\
A_{6}     & . & 2 & 1 & . & 1 & 1 & . & 2 & 1 & . & . & . & . & . & 1 & . & .\\
D_{6}     & . & 3 & 2 & . & 1 & 1 & 1 & 2 & 2 & . & . & . & . & . & . & 1 & .\\
E_{6}     & . & 2 & 1 & . & 2 & . & . & 1 & 1 & . & . & . & . & . & . & . & 1\\
\hline
\end{array}
\]

\[
\begin{array}{r||r|rrrr|rrrrrrrrr|r|}
\hline
A_{1}     & 1 & . & . & . & . & . & . & . & . & . & . & . & . & . & .\\
\hline
A_{111}'  & . & 1 & . & . & . & . & . & . & . & . & . & . & . & . & .\\
A_{111}'' & . & . & 1 & . & . & . & . & . & . & . & . & . & . & . & .\\
A_{21}    & . & . & . & 1 & . & . & . & . & . & . & . & . & . & . & .\\
A_{3}     & 1 & . & . & . & 1 & . & . & . & . & . & . & . & . & . & .\\
\hline
A_{2111}  & . & . & . & . & . & 1 & . & . & . & . & . & . & . & . & .\\
A_{221}   & . & . & 1 & . & . & . & 1 & . & . & . & . & . & . & . & .\\
A_{311}   & . & . & 1 & . & . & . & . & 1 & . & . & . & . & . & . & .\\
A_{32}    & . & . & . & 2 & . & . & . & . & 1 & . & . & . & . & . & .\\
A_{41}    & . & . & 1 & 1 & . & . & . & . & . & 1 & . & . & . & . & .\\
D_{41}    & . & . & . & 1 & . & . & . & . & . & . & 1 & . & . & . & .\\
A_{5}'    & 1 & . & . & 2 & 1 & . & . & . & . & . & . & 1 & . & . & .\\
A_{5}''   & 1 & . & . & 2 & 1 & . & . & . & . & . & . & . & 1 & . & .\\
D_{5}     & 1 & . & 1 & 1 & 1 & . & . & . & . & . & . & . & . & 1 & .\\
\hline
E_{7}     & 2 & . & 5 & 8 & 2 & . & 2 & 2 & 2 & 4 & . & . & 1 & 1 & 1\\
\hline
\end{array}
\]

\section{Type \texorpdfstring{$E_8$}{E8}.} \label{sec:e8}

The Coxeter group $W$ of type $E_8$ has Coxeter diagram:
\begin{align*}\baselineskip0pt
  1 - 3 - \vbox{\hbox{$2$}\hbox{\,$|$}\hbox{$4$}} - 5 - 6 - 7 - 8
\end{align*}
In this group, the longest element $w_0$ is central.
\begin{figure}[htbp]
  \centering
  \includegraphics[width=.9\linewidth]{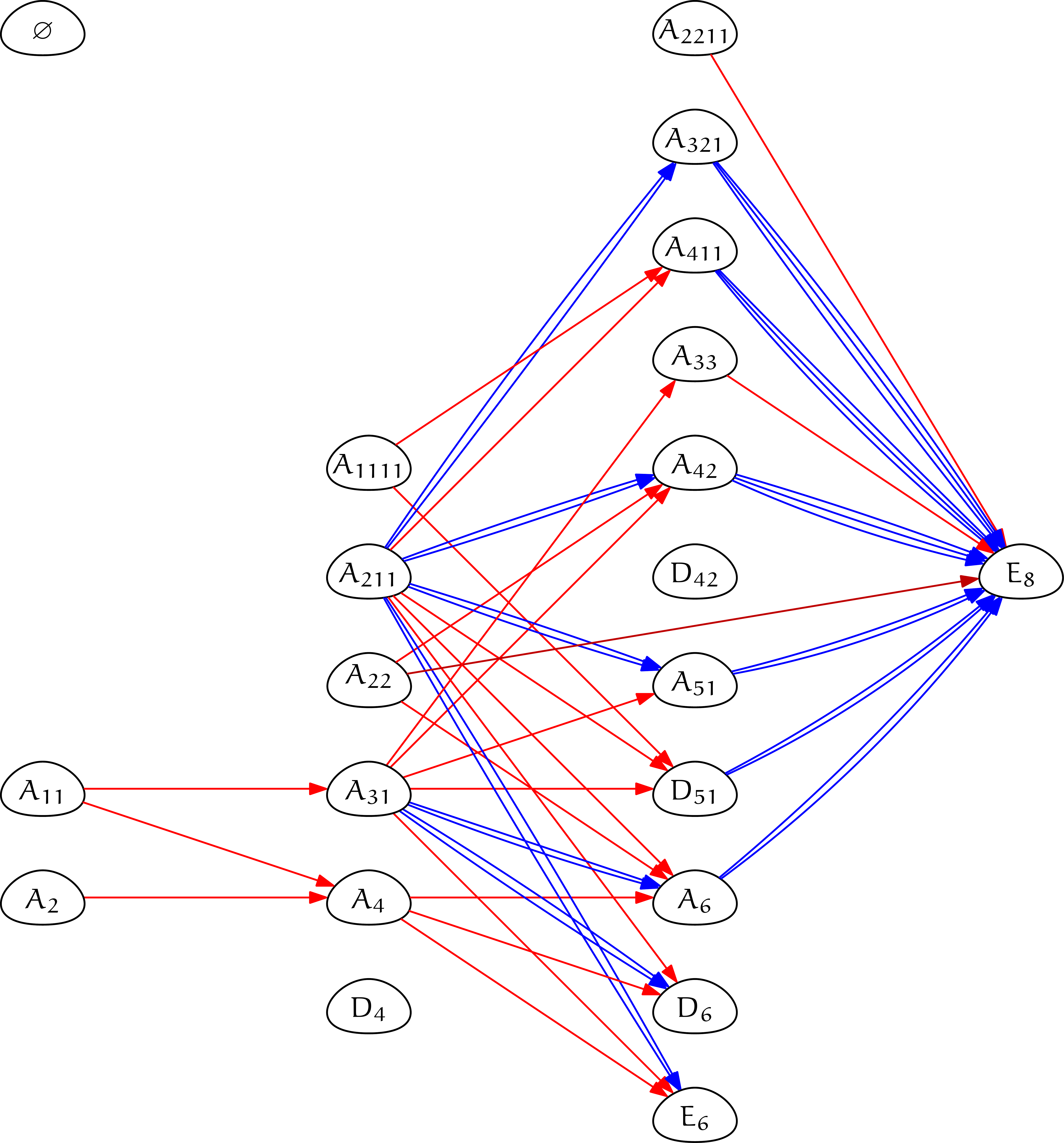}
  \caption{The even part of the quiver of type $E_8$.}
  \label{fig:e8e}
\end{figure}

\paragraph{Quiver and Relations.}  There are $41$ vertices and $109$ edges in total.  The presentation needs $33$
relations. 

The quiver of the even part,
as shown in 
Figure~\ref{fig:e8e},
has $21$ vertices and $49$ edges.
\[
\begin{array}{|ccc|
ccc|ccc|ccc|
}
\hline
\vb & \textrm{type} & \lambda &
\vb & \textrm{type} & \lambda &
\vb & \textrm{type} & \lambda &
\vb & \textrm{type} & \lambda \\
\hline
1. & \emptyset & [\emptyset] & 11. & A_{31} & [1348] &     25. & A_{33} & [134678] & 31. & D_{6} & [234567] \\  
\cline{1-3}
3. & A_{11} & [12] &           12. & A_{4} & [4567] &      26. & A_{42} & [123467] & 32. & E_{6} & [123456] \\  
\cline{10-12}
4. & A_{2} & [13] &            13. & D_{4} & [2345] &      27. & D_{42} & [234578] & 41. & E_{8} & [S] \\
\cline{1-6}
8. & A_{1111} & [1268] &       22. & A_{2211} & [123578] & 28. & A_{51} & [124567] & & & \\
9. & A_{211} & [2378] &        23. & A_{321} & [123678] &  29. & D_{51} & [123458] & & & \\
10. & A_{22} & [1367] &        24. & A_{411} & [125678] &  30. & A_{6} & [134567] &  & & \\
\hline
\end{array}
\]
\[\addtolength{\arraycolsep}{-1pt}
\begin{array}{|cc|
cc|cc|cc|
}
\hline
\eb & \alpha &
\eb & \alpha &
\eb & \alpha &
\eb & \alpha \\
\hline \hline
3 \to 11. & [1245;24] &   9 \to 30. & [134567;35] &  11 \tA 31. & [234567;24] & 25 \to 41. & [S;51] \\ 
3 \to 12. & [1234;32] &   9 \to 31. & [234567;45] &  11 \tB 31. & [234567;56] & 26 \tA 41. & [S;24] \\ 
4 \to 12. & [1234;12] &   9 \tA 32. & [123456;34] &  11 \to 32. & [123456;32] & 26 \tB 41. & [S;56] \\ 
8 \to 24. & [123468;32] & 9 \tB 32. & [123456;41] &  12 \to 30. & [134567;13] & 26 \tC 41. & [S;62] \\ 
8 \to 29. & [123457;41] & 10 \to 26. & [123467;12] & 12 \to 31. & [234567;23] & 28 \tA 41. & [S;23] \\ 
9 \tA 23. & [123567;15] & 10 \to 30. & [134567;45] & 12 \to 32. & [123456;12] & 28 \tB 41. & [S;32] \\ 
9 \tB 23. & [123567;56] & 10 \to 41. & [S;1652]    & 22 \to 41. & [S;46]      & 29 \tA 41. & [S;67] \\ 
9 \to 24. & [123468;12] & 11 \to 25. & [134678;13] & 23 \tA 41. & [S;35]      & 29 \tB 41. & [S;71] \\ 
9 \tA 26. & [123467;32] & 11 \to 26. & [123467;16] & 23 \tB 41. & [S;45] &      30 \tA 41. & [S;12] \\ 
9 \tB 26. & [123467;36] & 11 \to 28. & [124567;24] & 23 \tC 41. & [S;53] &      30 \tB 41. & [S;21] \\ 
9 \tA 28. & [124567;45] & 11 \to 29. & [123457;21] & 24 \tA 41. & [S;34] &      & \\                   
9 \tB 28. & [124567;52] & 11 \tA 30. & [134567;14] & 24 \tB 41. & [S;41] &      & \\                   
9 \to 29. & [123457;23] & 11 \tB 30. & [134567;41] & 24 \tC 41. & [S;73] &      & \\ 
\hline
\end{array}
\]

There are $16$ relations on the even part,
$14$ between paths of length $2$:
\begin{align*}
(3 \to 12 \to 30) &= -(3 \to 11 \tA 30), \\
(3 \to 12 \to 31) &= -(3 \to 11 \tA 31), \\ 
(8 \to 29 \tB 41) &= (8 \to 24 \tC 41), \\ 
(9 \to 24 \tB 41) &= -(9 \tA 23 \tB 41), \\
(9 \tA 26 \tB 41) &= -(9 \tA 23 \tC 41), \\
(9 \tB 26 \tB 41) &= (9 \tB 23 \tC 41), \\
(9 \tB 28 \tA 41) &= (9 \tA 26 \tA 41) - (9 \tB 26 \tA 41), \\
(9 \tA 28 \tB 41) &= -(9 \tA 23 \tA 41) - (9 \to 24 \tA 41), \\
(9 \tB 28 \tB 41) &= -(9 \tA 23 \tA 41) - (9 \tB 23 \tA 41), \\
(9 \to 29 \tA 41) &= -(9 \tB 26 \tC 41), \\
(9 \to 30 \tB 41) &=  -(9 \tB 26 \tA 41) + (9 \tA 28 \tA 41) + (9 \tB 28 \tA 41), \\
(10 \to 30 \tB 41) &= -(10 \to 26 \tA 41), \\
(11 \to 29 \tA 41) &= -(11 \to 26 \tC 41), \\
(11 \tB 30 \tB 41) &= (11 \to 26 \tA 41) - (11 \to 28 \tA 41) + (11 \tA 30 \tB 41), 
\end{align*}
and two between paths of length $3$ from vertex $A_{11}$ to vertex $E_8$:
\begin{align*}
(3 \to 11 \to 26 \tB 41) &= \tfrac12(3 \to 11 \to 25 \to 41), \\
(3 \to 11 \tB 30 \tB 41) &= -(3 \to 11 \to 26 \tA 41). 
\end{align*}

The quiver of the odd part,
as shown in 
Figure~\ref{fig:e8o},
has $20$ vertices and $60$ edges.
\[
\begin{array}{|ccc|
ccc|ccc|ccc|
}
\hline
\vb & \textrm{type} & \lambda &
\vb & \textrm{type} & \lambda &
\vb & \textrm{type} & \lambda &
\vb & \textrm{type} & \lambda \\
\hline \hline
2. & A_{1} & [1] &         15. & A_{221} & [12367] & 20. & A_{5} & [13456] &     36. & D_{52} & [S_6] \\
\cline{1-3}
5. & A_{111} & [147] &     16. & A_{311} & [12458] & 21. & D_{5} & [23456] &     37. & A_{7} & [S_2] \\ 
\cline{7-9}
6. & A_{21} & [124] &      17. & A_{32} & [24578] &  33. & A_{421} & [S_4] & 38. & E_{61} & [S_7] \\
7. & A_{3} & [245] &       18. & A_{41} & [24568] &  34. & A_{43} & [S_5] &  39. & D_{7} & [S_1] \\ 
\cline{1-3}
14. & A_{2111} & [12568] & 19. & D_{41} & [23458] &  35. & A_{61} & [S_3] &  40. & E_{7} & [S_8] \\ 
\hline
\end{array}
\]
\begin{figure}[htbp]
  \centering
  \includegraphics[width=.9\linewidth]{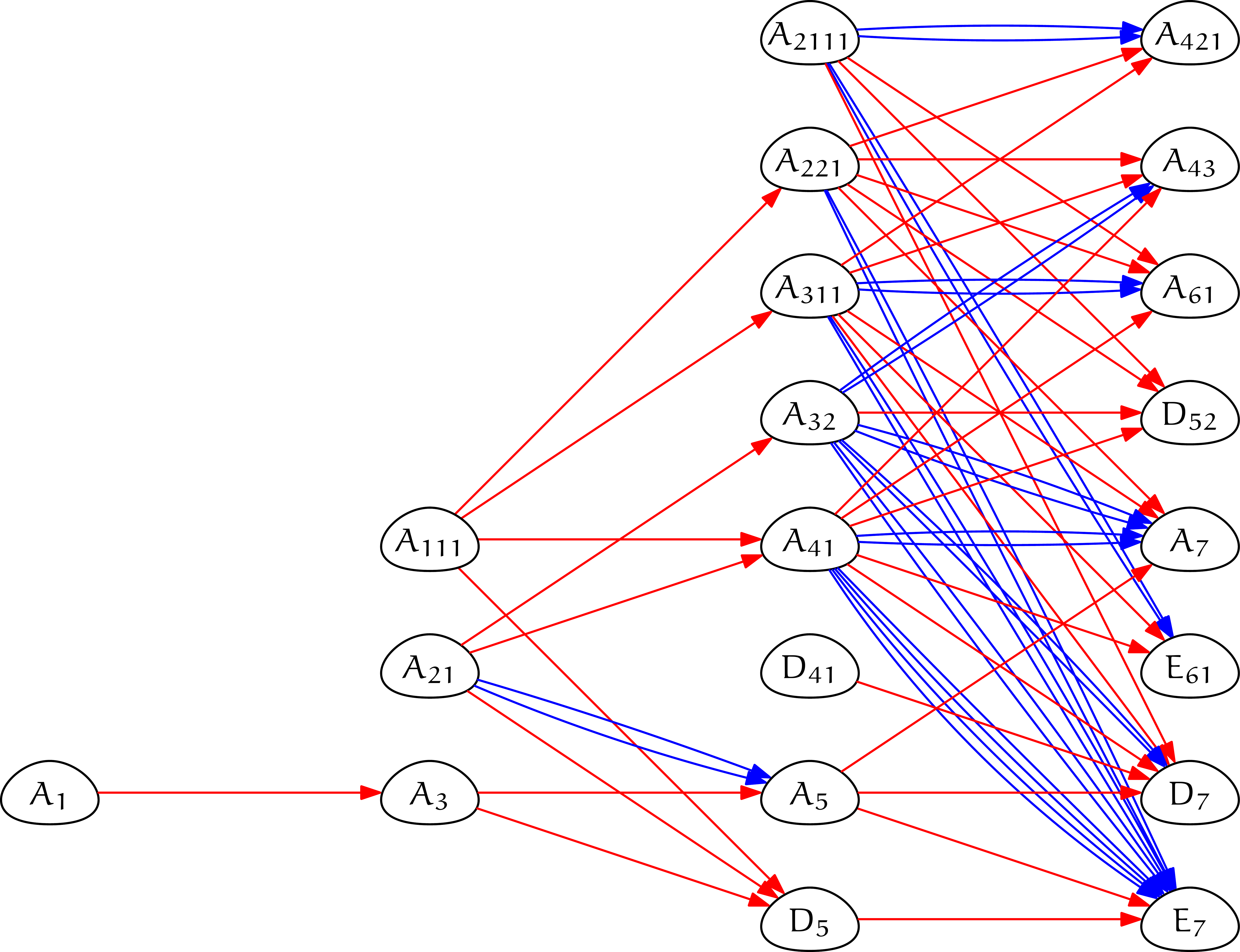}
  \caption{The odd part of the quiver of type $E_8$.}
  \label{fig:e8o}
\end{figure}
\[
\begin{array}{|cc|
cc|cc|cc|
}
\hline
\eb & \alpha &
\eb & \alpha &
\eb & \alpha &
\eb & \alpha \\
\hline \hline
2 \to 7. & [134;13] &    14 \to 36. & [S_6;41] & 16 \to 37. & [S_2;35] & 18 \to 35. & [S_3;24] \\
5 \to 15. & [12356;15] & 14 \tA 38. & [S_7;34] & 16 \to 38. & [S_7;32] & 18 \to 36. & [S_6;27] \\
5 \to 16. & [12457;24] & 14 \tB 38. & [S_7;41] & 16 \to 39. & [S_1;45] & 18 \tA 37. & [S_2;14] \\
5 \to 18. & [12346;32] & 14 \to 39. & [S_1;46] & 16 \tA 40. & [S_8;34] & 18 \tB 37. & [S_2;34] \\
5 \to 21. & [12345;41] & 15 \to 33. & [S_4;56] & 16 \tB 40. & [S_8;41] & 18 \to 38. & [S_7;12] \\
6 \to 17. & [13467;13] & 15 \to 34. & [S_5;36] & 17 \tA 34. & [S_5;12] & 18 \to 39. & [S_1;24] \\
6 \to 18. & [12346;12] & 15 \to 35. & [S_3;56] & 17 \tB 34. & [S_5;16] & 18 \tA 40. & [S_8;23] \\
6 \tA 20. & [13456;34] & 15 \to 36. & [S_6;23] & 17 \to 36. & [S_6;21] & 18 \tB 40. & [S_8;32] \\
6 \tB 20. & [13456;41] & 15 \to 37. & [S_2;46] & 17 \tA 37. & [S_2;15] & 18 \tC 40. & [S_8;56] \\
6 \to 21. & [12345;23] & 15 \tA 40. & [S_8;45] & 17 \tB 37. & [S_2;51] & 18 \tD 40. & [S_8;62] \\
7 \to 20. & [13456;13] & 15 \tB 40. & [S_8;53] & 17 \tA 39. & [S_1;25] & 19 \to 39. & [S_1;67] \\
7 \to 21. & [12345;21] & 16 \to 33. & [S_4;51] & 17 \tB 39. & [S_1;56] & 20 \to 37. & [S_2;13] \\
14 \tA 33. & [S_4;61] &  16 \to 34. & [S_5;32] & 17 \tA 40. & [S_8;24] & 20 \to 39. & [S_1;23] \\
14 \tB 33. & [S_4;67] &  16 \tA 35. & [S_3;25] & 17 \tB 40. & [S_8;51] & 20 \to 40. & [S_8;21] \\
14 \to 35. & [S_3;46] &  16 \tB 35. & [S_3;52] & 18 \to 34. & [S_5;67] & 21 \to 40. & [S_8;71] \\
\hline
\end{array}
\]

There  are $17$ relations on the odd part, all between paths of length $2$:
\begin{align*}
(5 \to 16 \to 33) &= \tfrac12(5 \to 15 \to 33), \\
(5 \to 16 \to 34) &= -\tfrac12(5 \to 15 \to 34), \\
(5 \to 18 \to 34) &= -\tfrac12(5 \to 15 \to 34), \\
(5 \to 16 \tB 35) &= -\tfrac12(5 \to 15 \to 35), \\
(5 \to 18 \to 35) &= -(5 \to 16 \tA 35), \\
(5 \to 18 \to 36) &= \tfrac12(5 \to 15 \to 36), \\
(5 \to 18 \tA 37) &= \tfrac12(5 \to 15 \to 37) - (5 \to 16 \to 37), \\
(5 \to 18 \tB 37) &= -(5 \to 16 \to 37), \\
(5 \to 16 \tB 40) &= -\tfrac12(5 \to 15 \tA 40), \\
(5 \to 18 \tC 40) &= -\tfrac12(5 \to 15 \tB 40), \\
(5 \to 18 \tB 40) &= -(5 \to 16 \to 40), \\
(5 \to 21 \to 40) &= (5 \to 18 \tD 40) + (5 \to 18 \tC 40) - (5 \to 18 \tB 40) + (5 \to 18 \tA 40), \\
(6 \to 18 \to 34) &= (6 \to 17 \tA 34), \\
(6 \tB 20 \to 37) &= -(6 \to 17 \tA 37) + (6 \to 18 \tA 37) + (6 \tA 20 \to 37), \\
(6 \tA 20 \to 37) &= -(6 \to 17 \tB 37) + (6 \to 18 \tB 37) - 2(6 \to 18 \tA 37), \\
(6 \tB 20 \to 39) &= -(6 \to 17 \tA 39) + (6 \to 18 \to 39) + (6 \tA 20 \to 39), \\
(6 \tB 20 \to 40) &= -(6 \to 17 \tA 40) + (6 \to 18 \tA 40) + (6 \tA 20 \to 40). \\
\end{align*}

\clearpage 
\paragraph{Projectives.}
\mbox{}

\noindent{\raggedright
$\begin{array}[b]{|c|}\hline
\emptyset\\\hline
\end{array}$,
$\begin{array}[b]{|c|}\hline
A_{1}\\\hline
\end{array}$,
$\begin{array}[b]{|c|}\hline
A_{11}\\\hline
\end{array}$,
$\begin{array}[b]{|c|}\hline
A_{2}\\\hline
\end{array}$,
$\begin{array}[b]{|c|}\hline
A_{111}\\\hline
\end{array}$,
$\begin{array}[b]{|c|}\hline
A_{21}\\\hline
\end{array}$,
$\begin{array}[b]{|c|}\hline
A_{3}\\\hline
A_{1}\\\hline
\end{array}$,
$\begin{array}[b]{|c|}\hline
A_{1111}\\\hline
\end{array}$,
$\begin{array}[b]{|c|}\hline
A_{211}\\\hline
\end{array}$,
$\begin{array}[b]{|c|}\hline
A_{22}\\\hline
\end{array}$,
$\begin{array}[b]{|c|}\hline
A_{31}\\\hline
A_{11}\\\hline
\end{array}$,
$\begin{array}[b]{|c|}\hline
A_{4}\\\hline
A_{11}\, A_{2}\\\hline
\end{array}$,
$\begin{array}[b]{|c|}\hline
D_{4}\\\hline
\end{array}$,
$\begin{array}[b]{|c|}\hline
A_{2111}\\\hline
\end{array}$,
$\begin{array}[b]{|c|}\hline
A_{221}\\\hline
A_{111}\\\hline
\end{array}$,
$\begin{array}[b]{|c|}\hline
A_{311}\\\hline
A_{111}\\\hline
\end{array}$,
$\begin{array}[b]{|c|}\hline
A_{32}\\\hline
A_{21}\\\hline
\end{array}$,
$\begin{array}[b]{|c|}\hline
A_{41}\\\hline
A_{111}\, A_{21}\\\hline
\end{array}$,
$\begin{array}[b]{|c|}\hline
D_{41}\\\hline
\end{array}$,
$\begin{array}[b]{|c|}\hline
A_{5}\\\hline
(A_{21})^{2}\, A_{3}\\\hline
A_{1}\\\hline
\end{array}$,
$\begin{array}[b]{|c|}\hline
D_{5}\\\hline
A_{111}\, A_{21}\, A_{3}\\\hline
A_{1}\\\hline
\end{array}$,
$\begin{array}[b]{|c|}\hline
A_{2211}\\\hline
\end{array}$,
$\begin{array}[b]{|c|}\hline
A_{321}\\\hline
(A_{211})^{2}\\\hline
\end{array}$,
$\begin{array}[b]{|c|}\hline
A_{411}\\\hline
A_{1111}\, A_{211}\\\hline
\end{array}$,
$\begin{array}[b]{|c|}\hline
A_{33}\\\hline
A_{31}\\\hline
A_{11}\\\hline
\end{array}$,
$\begin{array}[b]{|c|}\hline
A_{42}\\\hline
(A_{211})^{2}\, A_{22}\, A_{31}\\\hline
A_{11}\\\hline
\end{array}$,
$\begin{array}[b]{|c|}\hline
D_{42}\\\hline
\end{array}$,
$\begin{array}[b]{|c|}\hline
A_{51}\\\hline
(A_{211})^{2}\, A_{31}\\\hline
A_{11}\\\hline
\end{array}$,
$\begin{array}[b]{|c|}\hline
D_{51}\\\hline
A_{1111}\, A_{211}\, A_{31}\\\hline
A_{11}\\\hline
\end{array}$,
$\begin{array}[b]{|c|}\hline
A_{6}\\\hline
A_{211}\, A_{22}\, (A_{31})^{2}\, A_{4}\\\hline
(A_{11})^{2}\, A_{2}\\\hline
\end{array}$,
$\begin{array}[b]{|c|}\hline
D_{6}\\\hline
A_{211}\, (A_{31})^{2}\, A_{4}\\\hline
(A_{11})^{2}\, A_{2}\\\hline
\end{array}$,
$\begin{array}[b]{|c|}\hline
E_{6}\\\hline
(A_{211})^{2}\, A_{31}\, A_{4}\\\hline
(A_{11})^{2}\, A_{2}\\\hline
\end{array}$,
$\begin{array}[b]{|c|}\hline
A_{421}\\\hline
(A_{2111})^{2}\, A_{221}\, A_{311}\\\hline
A_{111}\\\hline
\end{array}$,
$\begin{array}[b]{|c|}\hline
A_{43}\\\hline
A_{221}\, A_{311}\, (A_{32})^{2}\, A_{41}\\\hline
A_{111}\, (A_{21})^{2}\\\hline
\end{array}$,
$\begin{array}[b]{|c|}\hline
A_{61}\\\hline
A_{2111}\, A_{221}\, (A_{311})^{2}\, A_{41}\\\hline
(A_{111})^{2}\, A_{21}\\\hline
\end{array}$,
$\begin{array}[b]{|c|}\hline
D_{52}\\\hline
A_{2111}\, A_{221}\, A_{32}\, A_{41}\\\hline
A_{111}\, (A_{21})^{2}\\\hline
\end{array}$,
$\begin{array}[b]{|c|}\hline
A_{7}\\\hline
A_{221}\, A_{311}\, (A_{32})^{2}\, (A_{41})^{2}\, A_{5}\\\hline
(A_{111})^{2}\, (A_{21})^{4}\, A_{3}\\\hline
A_{1}\\\hline
\end{array}$,
$\begin{array}[b]{|c|}\hline
E_{61}\\\hline
(A_{2111})^{2}\, A_{311}\, A_{41}\\\hline
(A_{111})^{2}\, A_{21}\\\hline
\end{array}$,
$\begin{array}[b]{|c|}\hline
D_{7}\\\hline
A_{2111}\, A_{311}\, (A_{32})^{2}\, A_{41}\, D_{41}\, A_{5}\\\hline
(A_{111})^{2}\, (A_{21})^{4}\, A_{3}\\\hline
A_{1}\\\hline
\end{array}$,
$\begin{array}[b]{|c|}\hline
E_{7}\\\hline
(A_{221})^{2}\, (A_{311})^{2}\, (A_{32})^{2}\, (A_{41})^{4}\, A_{5}\, D_{5}\\\hline
(A_{111})^{5}\, (A_{21})^{8}\, (A_{3})^{2}\\\hline
(A_{1})^{2}\\\hline
\end{array}$,
$\begin{array}[b]{|c|}\hline
E_{8}\\\hline
A_{22}\, A_{2211}\, (A_{321})^{3}\, (A_{411})^{3}\, A_{33}\, (A_{42})^{3}\, (A_{51})^{2}\, (D_{51})^{2}\, (A_{6})^{2}\\\hline
(A_{1111})^{4}\, (A_{211})^{15}\, (A_{22})^{4}\, (A_{31})^{10}\, (A_{4})^{2}\\\hline
(A_{11})^{8}\, (A_{2})^{2}\\\hline
\end{array}$.
\par}

\paragraph{Cartan Matrix.}

\[
\begin{array}{r||r|rr|rrrrrr|rrrrrrrrrrr|r|}
\hline
\emptyset & 1 & . & . & . & . & . & . & . & . & . & . & . & . & . & . & . & . & . & . & . & .\\
\hline
A_{11}    & . & 1 & . & . & . & . & . & . & . & . & . & . & . & . & . & . & . & . & . & . & .\\
A_{2}     & . & . & 1 & . & . & . & . & . & . & . & . & . & . & . & . & . & . & . & . & . & .\\
\hline
A_{1111}  & . & . & . & 1 & . & . & . & . & . & . & . & . & . & . & . & . & . & . & . & . & .\\
A_{211}   & . & . & . & . & 1 & . & . & . & . & . & . & . & . & . & . & . & . & . & . & . & .\\
A_{22}    & . & . & . & . & . & 1 & . & . & . & . & . & . & . & . & . & . & . & . & . & . & .\\
A_{31}    & . & 1 & . & . & . & . & 1 & . & . & . & . & . & . & . & . & . & . & . & . & . & .\\
A_{4}     & . & 1 & 1 & . & . & . & . & 1 & . & . & . & . & . & . & . & . & . & . & . & . & .\\
D_{4}     & . & . & . & . & . & . & . & . & 1 & . & . & . & . & . & . & . & . & . & . & . & .\\
\hline
A_{2211}  & . & . & . & . & . & . & . & . & . & 1 & . & . & . & . & . & . & . & . & . & . & .\\
A_{321}   & . & . & . & . & 2 & . & . & . & . & . & 1 & . & . & . & . & . & . & . & . & . & .\\
A_{411}   & . & . & . & 1 & 1 & . & . & . & . & . & . & 1 & . & . & . & . & . & . & . & . & .\\
A_{33}    & . & 1 & . & . & . & . & 1 & . & . & . & . & . & 1 & . & . & . & . & . & . & . & .\\
A_{42}    & . & 1 & . & . & 2 & 1 & 1 & . & . & . & . & . & . & 1 & . & . & . & . & . & . & .\\
D_{42}    & . & . & . & . & . & . & . & . & . & . & . & . & . & . & 1 & . & . & . & . & . & .\\
A_{51}    & . & 1 & . & . & 2 & . & 1 & . & . & . & . & . & . & . & . & 1 & . & . & . & . & .\\
D_{51}    & . & 1 & . & 1 & 1 & . & 1 & . & . & . & . & . & . & . & . & . & 1 & . & . & . & .\\
A_{6}     & . & 2 & 1 & . & 1 & 1 & 2 & 1 & . & . & . & . & . & . & . & . & . & 1 & . & . & .\\
D_{6}     & . & 2 & 1 & . & 1 & . & 2 & 1 & . & . & . & . & . & . & . & . & . & . & 1 & . & .\\
E_{6}     & . & 2 & 1 & . & 2 & . & 1 & 1 & . & . & . & . & . & . & . & . & . & . & . & 1 & .\\
\hline
E_{8}     & . & 8 & 2 & 4 & 15& 5 & 10& 2 & . & 1 & 3 & 3 & 1 & 3 & . & 2 & 2 & 2 & . & . & 1\\
\hline
\end{array}
\]

\[
\begin{array}{r||r|rrr|rrrrrrrr|rrrrrrrr|}
\hline
A_{1}     & 1 & . & . & . & . & . & . & . & . & . & . & . & . & . & . & . & . & . & . & .\\
\hline
A_{111}   & . & 1 & . & . & . & . & . & . & . & . & . & . & . & . & . & . & . & . & . & .\\
A_{21}    & . & . & 1 & . & . & . & . & . & . & . & . & . & . & . & . & . & . & . & . & .\\
A_{3}     & 1 & . & . & 1 & . & . & . & . & . & . & . & . & . & . & . & . & . & . & . & .\\
\hline
A_{2111}  & . & . & . & . & 1 & . & . & . & . & . & . & . & . & . & . & . & . & . & . & .\\
A_{221}   & . & 1 & . & . & . & 1 & . & . & . & . & . & . & . & . & . & . & . & . & . & .\\
A_{311}   & . & 1 & . & . & . & . & 1 & . & . & . & . & . & . & . & . & . & . & . & . & .\\
A_{32}    & . & . & 1 & . & . & . & . & 1 & . & . & . & . & . & . & . & . & . & . & . & .\\
A_{41}    & . & 1 & 1 & . & . & . & . & . & 1 & . & . & . & . & . & . & . & . & . & . & .\\
D_{41}    & . & . & . & . & . & . & . & . & . & 1 & . & . & . & . & . & . & . & . & . & .\\
A_{5}     & 1 & . & 2 & 1 & . & . & . & . & . & . & 1 & . & . & . & . & . & . & . & . & .\\
D_{5}     & 1 & 1 & 1 & 1 & . & . & . & . & . & . & . & 1 & . & . & . & . & . & . & . & .\\
\hline
A_{421}   & . & 1 & . & . & 2 & 1 & 1 & . & . & . & . & . & 1 & . & . & . & . & . & . & .\\
A_{43}    & . & 1 & 2 & . & . & 1 & 1 & 2 & 1 & . & . & . & . & 1 & . & . & . & . & . & .\\
A_{61}    & . & 2 & 1 & . & 1 & 1 & 2 & . & 1 & . & . & . & . & . & 1 & . & . & . & . & .\\
D_{52}    & . & 1 & 2 & . & 1 & 1 & . & 1 & 1 & . & . & . & . & . & . & 1 & . & . & . & .\\
A_{7}     & 1 & 2 & 4 & 1 & . & 1 & 1 & 2 & 2 & . & 1 & . & . & . & . & . & 1 & . & . & .\\
E_{61}    & . & 2 & 1 & . & 2 & . & 1 & . & 1 & . & . & . & . & . & . & . & . & 1 & . & .\\
D_{7}     & 1 & 2 & 4 & 1 & 1 & . & 1 & 2 & 1 & 1 & 1 & . & . & . & . & . & . & . & 1 & .\\
E_{7}     & 2 & 5 & 8 & 2 & . & 2 & 2 & 2 & 4 & . & 1 & 1 & . & . & . & . & . & . & . & 1\\
\hline
\end{array}
\]

\section{Concluding Remarks.}
\label{sec:conclude}

The quiver of the descent algebra of a Coxeter group of type $A$ has
been described by Schocker~\cite{Schocker2004}.  The quiver of the
descent algebra of a Coxeter group of type $B$ has recently been
constructed by Saliola~\cite{saliola-quiver}.  No attempts have been
made to describe the relations in these cases.  The quiver of the
descent algebra of a Coxeter group of type $D$ is not known in
general.  On the basis of the known results, and some experiments with
descent algebras of type $D$, we can classify the cases where no
relations are needed.

Suppose that $W$ is an irreducible finite Coxeter group.  Then:
\begin{itemize}
\item the descent algebra $\Sigma(W)$ is a path
  algebra only if $W$ is of type $A_n$ with $n \leq 4$, 
$B_n$ with $n \leq 5$, 
$D_n$ with $n \leq 5$,  $F_4$, $H_3$, $H_4$, or $I_2(m)$;
\item the descent algebra $\Sigma(W)$ is commutative only if $W$ is of
  type $A_1$, $B_2$, or $I_2(m)$ with $m \geq 6$ even.
\end{itemize}

\bibliography{descent}
\bibliographystyle{amsplain}

\end{document}